\newtheorem{theorem}{Theorem}[subsection]
\newtheorem{lemma}[theorem]{Lemma}
\newtheorem{proposition}[theorem]{Proposition}
\newtheorem{corollary}[theorem]{Corollary}
\newcommand{\End}{\mathrm{End}}
\newcommand{\Ext}{\mathrm{Ext}}
\newcommand{\Hom}{\mathrm{Hom}}
\newcommand{\im}{\mathrm{im}}
\newcommand{\Hilb}{\mathrm{Hilb}}
\newcommand{\Hilban}{\mathrm{Hilb}^n _A}
\newcommand{\Hilbfn}{\mathrm{Hilb}^n _F}
\newcommand{\bt}{\beta}
\newcommand{\GL}{\mathrm{GL}}
\newcommand{\NN}{\mathbb{N}}
\newcommand{\CC}{\mathbb{C}}
\newcommand{\A}{\mathcal{A}}
\newcommand{\C}{\mathcal{C}}
\newcommand{\nn}{\mathcal{N}}
\newcommand{\ran}{\mathrm{Rep}_A^n}
\newcommand{\rana}{\ran \times _k \mathbb{A}^n_k}
\newcommand{\uan}{\mathrm{U}_A^n}
\newcommand{\ufn}{\mathrm{U}_F^n}
\newcommand{\op}{\operatorname{op}}
\newcommand{\md}{\operatorname{mod}}
\newcommand{\n}{\noindent}
\theoremstyle{definition}
\newtheorem{definition}[theorem]{Definition}
\newtheorem{example}[theorem]{Example}
\theoremstyle{remark}
\newtheorem{remark}[theorem]{Remark}
\begin{document}
\title{On the smoothness of the scheme of linear representations and the Nori-Hilbert scheme of an associative algebra}

\author{Federica Galluzzi , Francesco Vaccarino \thanks{Work done during a visit to the Department of Mathematics, KTH (Stockholm ,Sweden). Support by the Institut Mittag-Leffler (Djursholm, Sweden)is gratefully acknowledged.
The first author is supported by PRIN 2008 "Geometria delle Variet\`a
Algebriche e dei loro Spazi di Moduli".
The second author is supported by the Wallenberg grant and by PRIN 2009 "Spazi di Moduli e Teoria di Lie".}}

\maketitle

\begin{abstract} Let $k$ be an algebraically closed field and let $A$ be a finitely generated $k-$algebra. We show that the scheme of n-dimensional representations of $A$ is smooth when $A$ is hereditary and coherent. We deduce from this the smoothness of the Nori-Hilbert scheme associated to $A.$

\medskip
\n
{\it Mathematics Subject Classification (2010)}: 14A22, 14C05 , 16Exx.

\n
{\it Keywords}: Representation Theory, Nori-Hilbert Scheme, Hochschild Cohomology, Coherent Algebras.

\end{abstract}

\section{Introduction}\label{intro}
Let $A$ be a finitely generated  associative $k-$algebra with $k$ algebraically closed field. In this paper we study the scheme $\ran$ of the $n-$dimensional representations of $A.$

According to Victor Ginzburg (see \cite[Chapt. 12]{G1}), it is believed that the noncommutative geometry of an associative algebra $A$ is \textit{approximated} (in certain cases) by the (commutative) geometry of the scheme
$\ran$. 

Following this path it appear relevant to look for conditions on $A$ for $\ran$ to be smooth. It is well-known that $\ran$ is smooth if $A$ is formally smooth. This last condition implies that $A$ is hereditary, that is, of global dimension one. In particualr, if $A$ is finite-dimensional over $k$, the scheme $\ran$ is smooth if and only if $A$ is hereditary.

In this paper we give a sufficient condition for a $k-$point in $\ran$ to be smooth.
Using this condition, we prove that $\ran$ is smooth when $A$ is hereditary and coherent, which means that the kernel of any morphism between finite rank free $A$-bimodules is finitely generated. Note that a formally smooth algebra is not necessarily coherent.Furthermore coherence does not implies hereditariety.  Therefore this is an independent notion. 

From the proved smoothness result, we derive the smoothness of the Nori - Hilbert scheme $\Hilban$  whose $k-$points parameterize left ideals of codimension $n$ of $A.\,$
When $A$ is commutative, this is nothing but the classical Hilbert scheme $\mathrm{Hilb}_X^n$ of the $n-$points on  $X=\mathrm{Spec}\,A.$

It is well-known that $\mathrm{Hilb}_X^n$ is smooth when $X$ is a smooth and quasi-projective curve.
Thus, we obtain an analogous result in the non-commutative case under the hypothesis "$A$ coherent of global dimension one".

Let $<[A,A]>$ denote the ideal generated by the commutators. When $n=1$ we have that $\ran=\Hilb_A^1 \cong \mathrm{Spec}\,A/<[A,A]>$. 
As a consequence of the previous results these affine schemes are all smooth when $A$ is hereditary and coherent.
It seems to be reasonable that any definition of smoothness for an associative algebra should imply the smoothness of $\mathrm{Spec}\,A/<[A,A]>$ as coherence (and formal smoothness) does for hereditary algebras.

In a forthcoming paper, we will give an account of the behavior of coherent algebras $A$ of global dimension two, but carrying the extra condition that $A$ is homologically smooth. This property can be stated by saying that $A$ admits a finite projective resolution by finitely generated $A-$bimodule as an $A-$bimodule, see \cite{G1,Ko-S} for this notion.

\smallskip
The paper goes as follows. In Section \ref{ran} we recall the definition of $\ran$ as the scheme parameterizing the $n-$dimensional representations of $A.\,$  We recall also the natural action of $GL_{n}$ on $\ran$  and on $\rana . \,$

In Section \ref{globdim} we recall the notions of global dimension of an algebra. Section \ref{smalg} is devoted to introduce formal smoothness and coherent algebras. Then we mention the known results on the smoothness of $\Hilban$ and $\ran \,$ for formally smooth algebras.

\smallskip
In Section \ref{defsmooth} we prove our result. It is exactly at this point the the coherence of $A$ play its role. To prove the smoothness of a $k$-point in $\ran$, indeed, we use the resolution of $A$ by finite rank free $A$-bimodules  to compute the Hochschild cohomology of $A$ with coefficients in $End_k(k^n)$. We are then able show that, for every rational point in $\ran$, there is an open neighborhood of it in which the dimension of the tangent space is constant and the formal deformations surject on the infinitesimal ones. This is obtained by using a suitably adapted argument due to Geiss and de la Pena (see \cite{Ge,Ge-P}), which worked only for finite dimensional $k-$algebras.

\smallskip
In Section \ref{defnori} we first recall the definition of $\Hilban .\,$ Then we define an open subscheme  $ \uan $ in $ \rana \,$ and we prove that $\Hilban$ is smooth if and only if $\uan$ is smooth (see \ref{uasm}).
In Theorem\ref{herhilb} we prove the smoothness of $\uan$ when $A$ is hereditary and coherent.

Finally, in \ref{h1}  we show that $\Hilb_A^1 \cong \mathrm{Spec}\,A/<[A,A]>$ is smooth when $A$ is hereditary and coherent.

\section{Notations}\label{not} Unless otherwise stated we adopt the following
notations:
\begin{itemize}
\item $k$ is an algebraically closed field.
\item $B$ is a commutative $k-$algebra.
\item $F=k\{x_1,\dots,x_m\}$ denotes the associative free $k-$algebra on
$m$ letters.
\item $G=GL_{n}$ is the general linear group scheme over $k.$
\item $A\cong F/J$ is a finitely generated associative $k-$algebra.
\item $\mathcal{N}_-,\,\mathcal{C}_-$ and $Sets$ denote the
categories of $\ -$algebras,
commutative $\ -$algebras and sets, respectively.
\item The term "$A-$module" indicates a left $A-$module,
\item $-{\operatorname{mod}} , \, -{\operatorname{bimod}} $ denote
the categories of left $-$modules and $-$bimodules,
respectively.
\item we write $\Hom_{\A}(B,C)$ in a category $\A$ with
$B,C$ objects in $\A$. If $\A=A-\md$, then we will write $\Hom_{A}(-,-)$.

\end{itemize}

\subsection{The scheme of $n-$dimensional representations}\label{ran}

\n
We start recalling the definition of $\ran ,\,$ the scheme of $n-$dimensional representations of algebras.

\smallskip
\n
Denote by $M_n(B)$ the full ring of $n \times n$ matrices over
$B,\,$ with $B$ a ring. If $f \ : \ B \to C $ is a ring homomorphism we
denote with
$$
M_n(f) :  M_n(B) \to M_n(C)
$$
the homomorphism induced on matrices.

\begin{definition}
Let  $A \in \mathcal {N}_k,\, B \in \mathcal {C}_k.\,$ By an {\em n-dimensional representation of} $A$ over $B$ we mean a homomorphism of $k-$algebras $
\rho\, : A \, \to M_n(B).\,
$
\end{definition}

\smallskip
\n
It is clear that this is equivalent to give an $A-$module structure on $B^n.$
The assignment $B\to \Hom_{\nn_k}(A,M_n(B))$ defines a covariant functor
\[
\mathcal {C}_{k} \longrightarrow Sets .
\]
This functor is represented by a commutative $k-$algebra. More precisely, there is the following
\begin{lemma}\cite[Lemma 1.2.]{DPRR}\label{rep}
For all $A\in\nn_k$ and $\rho\, : A \, \to M_n(B)$ a linear representation, there exist a commutative $k-$algebra $V_n(A)$ and a
representation $\pi_A:A\to M_n(V_n(A))$ such that $\rho\mapsto
M_n(\rho)\cdot\pi_A$ gives an isomorphism
\begin{equation}\label{proc}
\Hom_{\C_k}(V_n(A),B)\xrightarrow{\cong}\Hom_{\nn_k}(A,M_n(B))\end{equation}
for all $B\in C_k$.
\end{lemma}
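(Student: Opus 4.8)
The plan is to construct the representing algebra explicitly, first for the free algebra $F=k\{x_1,\dots,x_m\}$ and then by cutting out the relations of $A$. A $k$-algebra homomorphism $F\to M_n(B)$ is determined by, and may take, arbitrary values on the generators $x_1,\dots,x_m$, so the functor $B\mapsto\Hom_{\nn_k}(F,M_n(B))$ is naturally isomorphic to $B\mapsto M_n(B)^m\cong B^{mn^2}$; hence it is represented by the polynomial algebra $V_n(F)=k[\,\xi^{(l)}_{ij}\mid 1\le i,j\le n,\ 1\le l\le m\,]$, the universal representation $\pi_F\colon F\to M_n(V_n(F))$ being the one that sends $x_l$ to the generic matrix $(\xi^{(l)}_{ij})_{i,j}$. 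By Yoneda, for $\phi\in\Hom_{\C_k}(V_n(F),B)$ the rule $\phi\mapsto M_n(\phi)\circ\pi_F$ is a bijection onto $\Hom_{\nn_k}(F,M_n(B))$, natural in $B$.

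Next I would impose the relations. Write $A=F/J$ with $q\colon F\twoheadrightarrow A$ the quotient, and let $\mathfrak a\subseteq V_n(F)$ be the ideal generated by the matrix entries of $\pi_F(r)$ as $r$ runs over $J$. Since $\pi_F$ is an algebra homomorphism, the entries of $\pi_F(frh)=\pi_F(f)\pi_F(r)\pi_F(h)$ are $V_n(F)$-linear combinations of the entries of $\pi_F(r)$, and the entries of $\pi_F(r+r')$ are sums of entries of $\pi_F(r)$ and $\pi_F(r')$; therefore $\mathfrak a$ is already generated by the entries of $\pi_F(r)$ with $r$ ranging over any chosen two-sided generating set of $J$, and in particular $\mathfrak a$ is finitely generated when $J$ is. Set $V_n(A):=V_n(F)/\mathfrak a$, a commutative $k$-algebra, with projection $p\colon V_n(F)\twoheadrightarrow V_n(A)$. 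By construction $M_n(p)\circ\pi_F$ kills $J$, hence factors uniquely as $M_n(p)\circ\pi_F=\pi_A\circ q$ for a representation $\pi_A\colon A\to M_n(V_n(A))$, which will be the universal one.

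Finally I would verify the universal property. Given $B\in\C_k$ and a representation $\tau\colon A\to M_n(B)$, the free case gives a unique $\phi_0\in\Hom_{\C_k}(V_n(F),B)$ with $M_n(\phi_0)\circ\pi_F=\tau\circ q$. As $\tau\circ q$ vanishes on $J$, for every $r\in J$ all entries of $M_n(\phi_0)(\pi_F(r))$ vanish, i.e. $\phi_0(\mathfrak a)=0$; so $\phi_0$ descends to $\phi\in\Hom_{\C_k}(V_n(A),B)$ with $\phi\circ p=\phi_0$, and then $M_n(\phi)\circ\pi_A\circ q=M_n(\phi)\circ M_n(p)\circ\pi_F=M_n(\phi_0)\circ\pi_F=\tau\circ q$, so $M_n(\phi)\circ\pi_A=\tau$ because $q$ is surjective. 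For uniqueness, if $\phi,\psi\in\Hom_{\C_k}(V_n(A),B)$ both satisfy $M_n(-)\circ\pi_A=\tau$, then $\phi\circ p$ and $\psi\circ p$ both represent $\tau\circ q$ in the free case, hence $\phi\circ p=\psi\circ p$, and surjectivity of $p$ gives $\phi=\psi$. Naturality of $\phi\mapsto M_n(\phi)\circ\pi_A$ in $B$ is clear, being post-composition with the fixed $\pi_A$.

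I do not expect a real obstacle here: the construction is essentially forced. The only points that require care are the independence of the ideal $\mathfrak a$ from the chosen set of generators of $J$ (handled by the multiplicativity of $\pi_F$) and the naturality of the free-algebra correspondence; the remainder is a diagram chase using that $q$ and $p$ are surjective, hence epimorphisms.
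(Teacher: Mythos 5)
Your construction is exactly the one in the paper: represent the free case by the polynomial ring on the entries of the generic matrices, then pass to $A=F/J$ by quotienting $V_n(F)$ by the ideal generated by the entries of $\pi_F(r)$ for $r\in J$, and check the universal property by factoring through the surjections. The proposal is correct and, if anything, more detailed than the paper's argument on the uniqueness and descent steps.
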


\smallskip
\n
\begin{proof}
Consider at first the case $A=F.\,$ Define  $V_n(F):=k[\xi_{lij}]$ the polynomial ring in variables $\{\xi_{lij}\,:\, i,j=1,\dots,n,\, l=1,\dots,m\}$ over $k.\,$ To any $n-$dimensional representation $\rho :  F \rightarrow M_n(B)$ it corresponds a unique $m-$tuple of $n\times n$ matrices, namely the images of $x_1,\dots,x_m$, hence a unique $\bar{\rho}\in \Hom_{\C_k}(k[\xi_{lij}],B)$ such that $\bar{\rho}(\xi_{lij})=(\rho(x_l))_{ij}$.
Following Procesi {\cite{Pr,DPRR}} we introduce the generic matrices.
Let $\xi_l=(\xi_{lij})$ be the $n\times n$ matrix whose $(i,j)$ entry is $\xi_{lij}$ for $i,j=1,\dots,n$ and $l=1,\dots,m$.  We call   $\xi_{1},\dots,\xi_{m}$ the {\it generic $n\times n$ matrices}.
Consider the map
\[
\pi_F:F \to
M_n(V_n(F)),\;\;\;\;x_{l}\longmapsto \xi_{l},\;\;\; l=1,\dots,m\,.
\]

\smallskip
\n
It is then clear that the map
\[
\Hom_{\C_k}(V_n(F),B)\ni \sigma\mapsto M_n(\sigma)\cdot\pi_F\in \Hom_{\nn_k}(F,M_n(B))
\]

\smallskip
\n
gives the isomorphism (\ref{proc}) in this case.

\smallskip
\n
Let now $A= F/J$ be a finitely generated $k-$algebra and $\bt:F\to A.\,$
Write $a_{l}=\beta(x_{l})$, for $l=1,\dots,m .\,$
An $n-$dimensional representation
\[
\rho : A \rightarrow M_n(B)
\]
lifts to one of $F$ by composition with $\beta .\,$ This gives a homomorphism
\[
V_n(F)=k[\xi_{lij}]\to B
\]
that factors through the quotient $k[\xi_{lij}]/I,\,$ where
$I$ is the ideal of $V_n(F)$ generated by the $n \times n $ entries of $f(\xi_1, . . . ,\xi_m)$,
where $f$ runs over the elements of $J.\,$

\n
Define $V_n(A)=k[\xi_{lij}]/I$ and $\xi_l^A=(\xi_{lij}+I)=\xi_l+M_n(I)\in M_n(V_n(A))$ for $l=1,\dots,m$. There is then a homomorphism
\[\pi_A:A\to M_n(V_n(A))\]
given by $\pi_A(a_l)=\xi_l^A$ for $l=1,\dots,m$.
To conclude, given $\rho\in \Hom _{\nn_k}(A,M_n(B))$ there is a unique homomorphism of commutative $k-$algebras
\begin{equation}\label{class}
\bar{\rho} \ : V_n(A) \to B
\end{equation}
for which the following diagram
\begin{equation}\label{univ1}
\xymatrix{
A \ar[r]^(.3){\pi _A} \ar[rd]_{\rho}&
M_n(V_n(A))\ar[d]^{M_n(\bar{\rho})} \\
& M_n(B) }
\end{equation}
commutes.
\end{proof}

\smallskip
\noindent

\begin{definition}\label{pigreco}
We write $\ran$ to denote Spec\,$V_n(A).\,$ It is considered as a
$k-$scheme.  The map
\begin{equation}
\pi _A :A \to
M_n(V_n(A)),\;\;\;\;a_{l}\longmapsto \xi_{l}^A.
\end{equation}
is called {\em the universal n-dimensional representation.}
\end{definition}

\smallskip
\noindent

\begin{example}\label{commuting}
Here are some examples.
\end{example}

\n
(1) If $A=F,\,$ then $\mathrm{Rep}_F^n (k) = M_n(k)^m.\, $

\smallskip
\n
(2) If $A=F/J,\,$ the $B-$points of  $\ran$ can be described in the following way:
\[
\ran(B)= \{(X_1,\dots,X_m) \in M_n(B)^m \ : \ f(X_1,\dots, X_m)=0\; \mbox{for all} \, f \in J \}.
\]

\n
(3) If $A=k[x],\,$ then $\ran (k)= M_n(k).\,$

\n
(4) If $A=\CC [x,y],\,$ then
$$
\ran (\CC) \ = \ \{M_1,\, M_2 \in M_2(\CC) \ : \ M_1M_2 = M_2 M_1 \}
$$

\n
is the \textit{commuting scheme}.

\bigskip
\n
Being $A$ finitely generated, $\ran$ is of finite type. Note that $\ran$ may be quite complicated. It is not reduced in general and it seems to be hopeless to describe the coordinate ring of its reduced structure. However, there is an easy description of the tangent space to $\ran$ at the $k-$points. Let us recall the following

\smallskip
\n
\begin{definition}
The space of $k$-linear $\rho$-derivations from $A$ to $M_n(k)$, denoted as
$Der _{\rho}(A,M_n(k))$ , is the space of  $k$-linear maps
\[
\varphi : A \longrightarrow M_n(k) \;\;\;\; \mbox{such that}\;\;\; \varphi(ab)=\rho(a)\varphi(b)+\varphi(a)\rho(b).\,
\]
\end{definition}

\smallskip
\n
A $k-$point $\rho: A \rightarrow M_n(k)$ in $\ran$ induces an $A-$module structure on $k^n\,$ and a $\rho-$derivation is a $k-$linear map of this module.

\smallskip
\n
\begin{lemma}(\cite[12.4]{G1})\label{tgran}
Let $\rho$ be a $k-$ in $\ran.\,$ The tangent space $T_{\rho}\ran$ is isomorphic to $Der _{\rho}(A,M_n(k)).$
\end{lemma}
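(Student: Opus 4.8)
The plan is to compute the Zariski tangent space through the functor of points, using the dual numbers $k[\epsilon]:=k[\epsilon]/(\epsilon^2)$. By definition $T_\rho\ran$ is the fibre over $\rho$ of the map $\ran(k[\epsilon])\to\ran(k)$ induced by $\epsilon\mapsto 0$, and this fibre carries its usual $k$-vector space structure. Since by Lemma \ref{rep} the scheme $\ran$ represents the functor $B\mapsto\Hom_{\nn_k}(A,M_n(B))$ and the isomorphism (\ref{proc}) is natural in $B$, this fibre is canonically identified with the set of $k$-algebra homomorphisms $\tilde\rho:A\to M_n(k[\epsilon])$ whose composition with $M_n(k[\epsilon])\to M_n(k)$ equals $\rho$.

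Next I would unwind what such a $\tilde\rho$ is. The projection $k[\epsilon]\to k$ admits a canonical $k$-algebra section, so $M_n(k[\epsilon])=M_n(k)\oplus\epsilon\,M_n(k)$ as a $k$-algebra, with $\epsilon\,M_n(k)$ a square-zero ideal. Hence every $k$-linear map $\tilde\rho:A\to M_n(k[\epsilon])$ lifting $\rho$ can be written uniquely as $\tilde\rho(a)=\rho(a)+\epsilon\,\varphi(a)$ for a $k$-linear map $\varphi:A\to M_n(k)$, and conversely each such $\varphi$ yields a $k$-linear lift. It remains to observe that $\tilde\rho$ is multiplicative and unital if and only if $\varphi$ is a $\rho$-derivation: expanding $\tilde\rho(ab)=\tilde\rho(a)\tilde\rho(b)$, using $\rho(ab)=\rho(a)\rho(b)$ and $\epsilon^2=0$, the coefficient of $\epsilon$ gives precisely $\varphi(ab)=\rho(a)\varphi(b)+\varphi(a)\rho(b)$; moreover $\tilde\rho(1)=1$ forces $\varphi(1)=0$, which is in any case a consequence of the derivation identity. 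This produces a bijection
\[
T_\rho\ran\;\xrightarrow{\;\sim\;}\;Der_\rho(A,M_n(k)),\qquad \tilde\rho\longmapsto\varphi .
\]

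Finally I would check $k$-linearity: the scaling action of $k^\times$ on $\epsilon$ and the addition law on $\ran(k[\epsilon])$ over $\ran(k)$ correspond, under $\tilde\rho\mapsto\varphi$, to the evident scaling and pointwise addition of the maps $\varphi$, so the bijection is an isomorphism of $k$-vector spaces. There is no genuine obstacle in this argument; the only point deserving a little care is to confirm that the vector space structure on $T_\rho\ran$ defined as $(\mathfrak m_\rho/\mathfrak m_\rho^2)^{\ast}$ agrees with the one transported from $\ran(k[\epsilon])$, which is the standard compatibility of the two descriptions of the tangent space of an affine scheme at a rational point, and hence may be invoked without further ado.
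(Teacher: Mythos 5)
Your argument is correct and is essentially the paper's own proof: the paper likewise identifies the tangent space at $\rho$ with ring homomorphisms $\rho+\varepsilon\pi:A\to M_n(k[\varepsilon]/\varepsilon^2)$ lifting $\rho$ and reads off the derivation identity from multiplicativity. You simply spell out more carefully the representability via Lemma \ref{rep}, the square-zero decomposition of $M_n(k[\epsilon])$, and the compatibility of the two vector space structures, all of which the paper leaves implicit.
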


\begin{proof}
Let $\rho $ be a $k$-point in $ \ran .\,$ A point in the tangent space $T_{\rho}\ran \,$ is given by a $k$-linear map $\pi : A \longrightarrow M_n(k)$ such that
\[
\rho + \varepsilon \pi : A \longrightarrow M_n(k[\varepsilon]/\varepsilon ^2)
\]
is a ring morphism. This gives that $\pi (ab)=\rho (a)\pi(b)+ \pi(a)\rho(b),\,$ hence
 $\pi \in Der _{\rho}(A,M_n(k)).\, $ Moving backwards we obtain $ T_{\rho}\ran \cong Der _{\rho}(A,M_n(k)).\,$
\end{proof}

\medskip
\n
\subsection{Actions}\label{act}

\n
Let now  $G$ be the general linear group scheme over $k\,$ whose $B-$points form the group $ \GL_{n}(B)$ of $n \times n$ invertible matrices with entries in
$B.\,$ Let $\mathbb{A}^n_k$ the $n-$dimensional affine scheme over $k.\,$
\begin{definition}\label{actions}
 Given $ B \in  \mathcal{C}_k,\,g \in G(B),\, \rho \in \ran(B) $ and $a \in A ,\,$ define
\[
\begin{matrix}
G(B) \times \ran(B) &  \longrightarrow & \ran(B) \\
(g,\rho) & \longrightarrow & \rho ^g \ : \ \rho^g(a)= g\rho(a)g^{-1} .\, \\
\end{matrix}
\]

\smallskip
\n
Analogously, for $v \in \mathbb{A}^n_k(B)$ define

\[
\begin{matrix}
G(B) \times \ran(B) \times _k \mathbb{A}^n_k(B) &  \longrightarrow & \ran (B) \times _k \mathbb{A}^n_k(B) \\
(g,\rho,v) & \longrightarrow & ( \rho ^g,gv).
\end{matrix}
\]
\end{definition}

\smallskip
\n
\begin{remark}\label{isorho}
The $A-$module structures induced  on $B^n$ by two representations $\rho$ and $\rho '$ are
isomorphic if and only if there exists $g \in G(B)$ such that $\rho
' = \rho ^g .$
\end{remark}

\begin{definition}\label{modrep}
We denote by $\ran// G=\mathrm{Spec}\,V_n(A)^{G(k)}\,$ the
categorical quotient (in the category of $k-$schemes) of $\ran$ by
$G.\,$ It is the \textit{(coarse) moduli space of $n-$dimensional
linear representations} of $A$.
\end{definition}

\section{Global dimension}\label{globdim}

\n
Let $M$ be an $A-$module. A {\em projective} resolution is an acyclic complex
\[
\dots \rightarrow P_{3} \rightarrow P_2 \rightarrow P_1 \rightarrow P_0 \rightarrow M \rightarrow 0
\]
where $P_i$ are projective $A-$modules.

\begin{definition}
Let $M$ be an $A-$module. The \textit{(projective) dimension} $pd(M)$ of $M$ is the minimum length of a projective resolution of $M.\, $
If every projective resolution of $M$ is infinite then its projective dimension is infinite.
\end{definition}

\n
For $i=0,1,...$ and $M,N\in A-$mod we denote as usual by $\Ext_{A}^i(M,N)$ the $i-$th ext-group, which is the value on $(M,N)$ of the $i-$th derived functor of $\Hom_{A}(-,-)$. See \cite{Co2} for further readings.

\n
It is possible to express the projective dimension of an $A-$module $M$ in terms of the $\Ext_{A}^i .$

\begin{proposition}(\cite[Proposition 2.6.2]{Co2})\label{prext}
The following conditions are equivalent:
\begin{enumerate}
\item Projective dimension of $M$ is $\leq n$ .
\item $\Ext ^i _{A} (M,-)=0\,$ for all $i > n .$
\item $\Ext ^{n+1} _{A} (M,-)=0 .\,$
\end{enumerate}
\end{proposition}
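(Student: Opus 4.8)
The plan is to prove the standard equivalence among the three conditions by a cyclic chain of implications, using the long exact sequence in $\Ext$ and a dimension-shifting (syzygy) argument.

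\medskip
\noindent\textbf{Proof plan.} First I would show $(1)\Rightarrow(2)$. Assume $M$ has a projective resolution $0\to P_n\to\cdots\to P_0\to M\to 0$ of length at most $n$. One computes $\Ext^i_A(M,-)$ as the cohomology of the complex $\Hom_A(P_\bullet,-)$, which is concentrated in degrees $0,\dots,n$; hence $\Ext^i_A(M,N)=0$ for all $i>n$ and all $N\in A\text{-mod}$. The implication $(2)\Rightarrow(3)$ is trivial, since $n+1>n$.

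\medskip
\noindent The substantive step is $(3)\Rightarrow(1)$, which I expect to be the main obstacle, since it is the one requiring an actual construction rather than a formal consequence. Here I would use dimension shifting. Choose any projective resolution $\cdots\to P_1\to P_0\to M\to 0$ and let $K=\ker(P_{n-1}\to P_{n-2})$ be the $n$-th syzygy (with the convention $P_{-1}=M$ for small $n$). Breaking the resolution into short exact sequences $0\to Z_{j+1}\to P_j\to Z_j\to 0$ and repeatedly applying the long exact sequence for $\Ext_A(-,N)$ gives the standard isomorphism $\Ext^{n+1}_A(M,N)\cong\Ext^1_A(K,N)$ for every $N$; the intermediate $\Ext$ terms involving the projectives $P_j$ vanish in positive degree. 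By hypothesis $\Ext^{n+1}_A(M,-)=0$, so $\Ext^1_A(K,-)=0$. Applying this to the short exact sequence $0\to K\to P_{n-1}\to Z_{n-1}\to 0$, the connecting map shows $\Hom_A(P_{n-1},K)\to\Hom_A(K,K)$ is surjective, so the inclusion $K\hookrightarrow P_{n-1}$ splits and $K$ is a direct summand of $P_{n-1}$, hence projective. Truncating the resolution at $K$ then exhibits a projective resolution of $M$ of length at most $n$, i.e.\ $pd(M)\le n$.

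\medskip
\noindent The only technical care needed is bookkeeping in the edge cases $n=0$ and $n=1$, where some of the syzygies coincide with $M$ itself; these are handled by the same argument with the convention that $Z_0=M$. Since this proposition is cited from \cite{Co2}, I would in practice simply refer to that source, but the sketch above is the self-contained argument.
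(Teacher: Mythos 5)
The paper gives no proof of this proposition --- it is quoted directly from Cohn --- so there is nothing to compare your route against; I am judging your argument on its own. The implications $(1)\Rightarrow(2)\Rightarrow(3)$ and the dimension-shifting isomorphism $\Ext^{n+1}_A(M,N)\cong\Ext^1_A(K,N)$ for the $n$-th syzygy $K=Z_n$ are all correct and standard. The problem is in the very last step of $(3)\Rightarrow(1)$. You apply the vanishing $\Ext^1_A(K,-)=0$ to the sequence $0\to K\to P_{n-1}\to Z_{n-1}\to 0$ and claim the connecting map makes $\Hom_A(P_{n-1},K)\to\Hom_A(K,K)$ surjective, so that the inclusion $K\hookrightarrow P_{n-1}$ splits. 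But the long exact sequence of $\Ext_A(-,K)$ for that short exact sequence reads
\[
\Hom_A(P_{n-1},K)\to\Hom_A(K,K)\to\Ext^1_A(Z_{n-1},K)\to\cdots,
\]
so the obstruction to extending $\mathrm{id}_K$ over $P_{n-1}$ lives in $\Ext^1_A(Z_{n-1},K)\cong\Ext^n_A(M,K)$, which your hypothesis does not control. Your hypothesis controls $\Ext^1$ with $K$ in the \emph{first} slot, which is irrelevant to splitting a sequence in which $K$ is the \emph{sub}module. Indeed the conclusion itself is false in general: $0\to\ZZ\xrightarrow{2}\ZZ\to\ZZ/2\to 0$ has projective kernel but does not split, so $K$ need not be a direct summand of $P_{n-1}$.

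The repair is standard and short. From $\Ext^1_A(K,-)=0$ deduce that $K$ is projective by applying the vanishing to a presentation $0\to K'\to Q\to K\to 0$ with $Q$ free, where $K$ now sits as the quotient: the sequence $\Hom_A(Q,K')\to\Hom_A(K',K')\to\Ext^1_A(K,K')=0$ shows $\mathrm{id}_{K'}$ extends, that sequence splits, and $K$ is a summand of $Q$, hence projective. Once $K$ is projective you do not need any splitting of $0\to K\to P_{n-1}\to Z_{n-1}\to 0$: the truncation $0\to K\to P_{n-1}\to\cdots\to P_0\to M\to 0$ is already a projective resolution of length $n+1$ terms, i.e.\ of length $\le n$, giving $pd(M)\le n$. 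With that one step corrected, your proof is complete.
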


\smallskip
\n
\begin{definition}The \textit{global dimension} of a ring $A,\,$ denoted with $gd(A)$ is defined to be the supremum of the set of projective dimensions of all (left) $A-$modules.
\end{definition}

\smallskip
\n
The global dimension of $A$ is a non-negative integer or infinity and it is a homological invariant of the ring.
If $A$ is non commutative, note that the left global dimension can be different from the right global dimension, formed from right $A$-modules. These two numbers coincide if $A$ is Noetherian (see \cite [Cor.2.6.7.]{Co2}).

\smallskip
\noindent
\begin{remark}
It is customary to use the notation $gd(A)\leq n$ instead of $gd(A)=n,\,$ being $gd(A)$ a supremum of a set.
\end{remark}

\smallskip
\noindent
\subsection{Examples}\label{ex1}

\n
\subsubsection{Polynomial rings}
Let $A = k[x_1,...,x_n]$ be the ring of polynomials in $n$ variables over $k.$ The global dimension of $A$ is equal to $n$.  More generally, if $R$ is a Noetherian ring of finite global dimension $n$ and $A = R[x]$ is a ring of polynomials in one variable over $R,\,$ then the global dimension of $A$ is equal to $n + 1$.

\subsubsection{Commutative ring with finite global dimension}\label{serre}

\n
In commutative algebra a commutative and noetherian ring $A$ is said to be {\em regular} if the localization at every prime ideal is a regular local ring. There is the following result due to Serre

\begin{theorem}(\cite  [Chap.IV.D.]{Se})
Let $(R,m)$ a commutative noetherian local ring. Then $gd(R) < \infty$ iff $R$ is a regular local ring. In this case, $gd(R)= \dim R,\,$ where $\dim R$ is the Krull dimension of $R$.
\end{theorem}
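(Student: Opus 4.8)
The plan is to prove the two implications separately, reducing everything to minimal free resolutions of the residue field $\kappa:=R/\mathfrak{m}$. I would first record the reduction $gd(R)=pd(\kappa)$: over a Noetherian local ring every finitely generated module $M$ admits a minimal free resolution $F_\bullet\to M$ (lift a $\kappa$-basis of $M/\mathfrak{m}M$, pass to the kernel of $F_0\to M$, and iterate; the successive kernels stay finitely generated since $R$ is Noetherian), and by Nakayama all differentials of $F_\bullet$ have entries in $\mathfrak{m}$, so $\operatorname{Tor}^R_i(M,\kappa)\cong F_i\otimes_R\kappa$ and $pd(M)=\sup\{\,i:\operatorname{Tor}^R_i(M,\kappa)\neq 0\,\}$. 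Since $pd(\kappa)\le n$ forces $\operatorname{Tor}^R_{n+1}(\kappa,-)=0$, hence $\operatorname{Tor}^R_{n+1}(M,\kappa)=\operatorname{Tor}^R_{n+1}(\kappa,M)=0$ for every $M$, we get $pd(M)\le pd(\kappa)$ for all finitely generated $M$; as $gd(R)=\sup_{I}pd(R/I)$ over ideals $I$ (which are finitely generated), this yields $gd(R)=pd(\kappa)$.

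For the implication ``$R$ regular $\Rightarrow gd(R)=\dim R$'': if $\dim R=d$ then $\dim_\kappa\mathfrak{m}/\mathfrak{m}^2=d$, so $\mathfrak{m}=(x_1,\dots,x_d)$, and since each $R/(x_1,\dots,x_i)$ is again regular local of dimension $d-i$, hence a domain, the sequence $x_1,\dots,x_d$ is $R$-regular. The Koszul complex $K_\bullet(x_1,\dots,x_d;R)$ is then a free resolution of $R/\mathfrak{m}=\kappa$ of length $d$, so $pd(\kappa)\le d$; it is moreover minimal, whence $\operatorname{Tor}^R_d(\kappa,\kappa)\cong\wedge^d(\mathfrak{m}/\mathfrak{m}^2)\neq 0$ and $pd(\kappa)=d$. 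By the reduction, $gd(R)=d=\dim R$.

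For the converse, assume $gd(R)=n<\infty$, so $pd(\kappa)=n$. I would deduce regularity from the Auslander--Buchsbaum formula $pd(\kappa)+\operatorname{depth}(\kappa)=\operatorname{depth}(R)$: as $\mathfrak{m}$ annihilates $\kappa$ we have $\operatorname{depth}(\kappa)=0$, so $pd(\kappa)=\operatorname{depth}(R)\le\dim R$; conversely the Koszul complex on a minimal generating set of $\mathfrak{m}$ maps to the minimal resolution of $\kappa$ and induces an injection $\wedge^\bullet(\mathfrak{m}/\mathfrak{m}^2)\hookrightarrow\operatorname{Tor}^R_\bullet(\kappa,\kappa)$, so $pd(\kappa)\ge e:=\dim_\kappa\mathfrak{m}/\mathfrak{m}^2$, while always $\dim R\le e$. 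Hence $\dim R\ge pd(\kappa)\ge e\ge\dim R$: all four quantities coincide, so $e=\operatorname{embdim}(R)=\dim R$, $R$ is regular, and $gd(R)=pd(\kappa)=\dim R$. Alternatively one may induct on $e$: for $e\ge 1$ one checks $\mathfrak{m}\notin\operatorname{Ass}(R)$ (otherwise $\mathfrak{m}y=0$ with $y\neq 0$, and applying this to the injective last map $F_n\hookrightarrow\mathfrak{m}F_{n-1}$ of a minimal resolution of $\kappa$ gives $yF_n=0$, impossible), so by prime avoidance there is a nonzerodivisor $x\in\mathfrak{m}\setminus\mathfrak{m}^2$; then $\operatorname{embdim}(R/xR)=e-1$ and $\dim(R/xR)=\dim R-1$, while the change-of-rings theorem gives $pd_{R/xR}(\kappa)=n-1$, so $R/xR$, and hence $R$, is regular by induction.

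The main obstacle is the one homological input that must be imported: either the Auslander--Buchsbaum formula (itself proved by induction on $\operatorname{depth} R$, comparing $M$ with $M/xM$ for a regular element $x$), or, on the inductive route, the identity $pd_{R/xR}(\kappa)=pd_R(\kappa)-1$. Of the latter, $pd_R(\kappa)\le pd_{R/xR}(\kappa)+1$ is the easy half, proved by induction on $pd_{R/xR}(\kappa)$ starting from $pd_R(R/xR)=1$ (via $0\to R\xrightarrow{\ x\ }R\to R/xR\to 0$); the subtle half is finiteness of $pd_{R/xR}(\kappa)$, which genuinely requires $x\notin\mathfrak{m}^2$ — for $R=k[[t]]$ and $x=t^2$ one has $pd_R(\kappa)=1$ but $pd_{R/xR}(\kappa)=\infty$ — and is extracted from the change-of-rings spectral sequence $\operatorname{Tor}^{R/xR}_a(\kappa,\operatorname{Tor}^R_b(R/xR,\kappa))\Rightarrow\operatorname{Tor}^R_{a+b}(\kappa,\kappa)$, whose $E_2$-page is supported in the two rows $b=0,1$ because $\operatorname{Tor}^R_\bullet(R/xR,\kappa)$ is the homology of $0\to\kappa\xrightarrow{\,0\,}\kappa$. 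Everything else — existence of minimal resolutions, prime avoidance, and the dimension facts $\dim(R/xR)=\dim R-1$ and $\operatorname{embdim}(R/xR)=e-1$ — is routine.
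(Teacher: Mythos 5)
The paper offers no proof of this statement---it is quoted as background directly from Serre's \emph{Local Algebra}---so there is nothing internal to compare against; your argument is a correct outline of the standard Auslander--Buchsbaum--Serre proof, which is essentially the one in the cited source. The reduction $gd(R)=pd_R(R/\mathfrak m)$ via minimal free resolutions, the Koszul resolution for the ``regular $\Rightarrow$ finite'' direction, and the converse via either the Auslander--Buchsbaum formula together with the injection of $\wedge^\bullet(\mathfrak m/\mathfrak m^2)$ into $\operatorname{Tor}^R_\bullet(\kappa,\kappa)$, or the change-of-rings induction on the embedding dimension, are all sound, and you have correctly isolated the nontrivial imported ingredients.
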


\n
Moreover,

\begin{theorem}(\cite[5.94]{La})
If $R$ is a commutative noetherian ring, then the following are equivalent,
\begin{enumerate}
\item
$R_p$ is a local regular ring for any $p \in Spec R.$
\item
$R_m$ is a local regular ring for any  maximal ideal $m$ in $R$.
\item
$pd(p)< \infty$ for any $p \in \mathrm{Spec}R.$
\item
$pd(m)< \infty$ for any maximal ideal $m$ in $R$.
\item
$pd(M)< \infty$ for any finitely generated $R-$module $M.$
\end{enumerate}
\end{theorem}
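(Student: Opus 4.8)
The plan is to prove the cycle $(5)\Rightarrow(3)\Rightarrow(4)\Rightarrow(2)\Rightarrow(1)\Rightarrow(5)$, using Serre's theorem above, together with the standard companion fact that the global dimension of a Noetherian local ring equals the projective dimension of its residue field, as the bridge between homological finiteness and regularity.

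The implications among $(2),(3),(4),(5)$ are essentially formal. For $(5)\Rightarrow(3)$: $R/p$ is finitely generated, so $pd(R/p)<\infty$, and the long exact $\Ext$-sequence of $0\to p\to R\to R/p\to 0$ gives $pd(p)\le\max(pd(R/p)-1,0)<\infty$. The step $(3)\Rightarrow(4)$ is immediate, since maximal ideals are prime. For $(4)\Rightarrow(2)$: if $m$ is maximal then $pd(R/m)\le pd(m)+1<\infty$ by the same exact sequence; since localization cannot raise projective dimension, $pd_{R_m}(R_m/mR_m)\le pd(R/m)<\infty$, so $gd(R_m)<\infty$ and $R_m$ is regular by Serre's theorem. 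For $(2)\Rightarrow(1)$: any prime $p$ is contained in a maximal ideal $m$, and $R_p$ is a localization of $R_m$, so $gd(R_p)\le gd(R_m)=\dim R_m<\infty$ by $(2)$ and Serre, whence $R_p$ is regular.

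The substantial implication is $(1)\Rightarrow(5)$. Fix a finitely generated $M$ and a resolution $\cdots\to F_1\to F_0\to M\to 0$ by finitely generated free modules, and let $\Omega^iM$ be the $i$-th syzygy; each $\Omega^iM$ is finitely presented because $R$ is Noetherian. Hence $\Omega^iM$ is projective iff it is locally free, and the non-free locus $Z_i=\{\,p\in\mathrm{Spec}\,R:(\Omega^iM)_p\text{ is not }R_p\text{-free}\,\}$ is a closed subset of $\mathrm{Spec}\,R$, since the free locus of a finitely generated module over a Noetherian ring is open. Because localization is exact, $(\Omega^iM)_p$ is the $i$-th syzygy of $M_p$, so $p\notin Z_i$ precisely when $pd_{R_p}(M_p)\le i$; thus $Z_0\supseteq Z_1\supseteq\cdots$ and $\bigcap_i Z_i=\{\,p:pd_{R_p}(M_p)=\infty\,\}$, which is empty because $(1)$ and Serre's theorem give $gd(R_p)<\infty$ for every $p$. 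Since $\mathrm{Spec}\,R$ is a Noetherian space it satisfies the descending chain condition on closed sets, so the chain $(Z_i)$ stabilizes and therefore $Z_N=\emptyset$ for some $N$; then $\Omega^NM$ is finitely presented and locally free, hence projective, and $pd(M)\le N<\infty$.

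The main obstacle is exactly this last step: since $R$ need not have finite Krull dimension, $\sup_p\dim R_p$ may be infinite, so one cannot bound $pd(M)$ directly by a dimension count. The trick is to upgrade the pointwise finiteness of $pd_{R_p}(M_p)$ to a uniform bound by a descending-chain argument on the closed non-free loci of the syzygies inside the Noetherian space $\mathrm{Spec}\,R$, after which ``finitely presented ${}+{}$ locally free $\Rightarrow$ projective'' finishes the job.
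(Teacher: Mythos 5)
The paper does not prove this statement at all --- it is quoted verbatim from Lam \cite[5.94]{La} as a known result --- so there is no internal proof to compare yours against; I can only assess your argument on its own terms, and it is correct. The cycle $(5)\Rightarrow(3)\Rightarrow(4)\Rightarrow(2)\Rightarrow(1)\Rightarrow(5)$ works: the first three implications are the routine ones (note that $(5)\Rightarrow(3)$ is even more immediate than your dimension-shift argument, since $p$ is itself a finitely generated $R$-module when $R$ is Noetherian), and the passage to regularity in $(4)\Rightarrow(2)$ and $(2)\Rightarrow(1)$ correctly routes through the two standard inputs you name: Serre's theorem (quoted just above in the paper) and the identity $gd(S)=pd_S(S/n)$ for a Noetherian local ring $(S,n)$, together with the fact that localization does not increase projective dimension. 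The substantial implication $(1)\Rightarrow(5)$ is also handled correctly, and you rightly identify the trap: since $\mathrm{Spec}\,R$ may have infinite Krull dimension, one cannot bound $pd(M)$ by $\sup_p \dim R_p$. Your fix --- the non-free loci $Z_i$ of the syzygies $\Omega^i M$ are closed (finitely presented modules have open free locus), form a descending chain with empty intersection by the pointwise regularity, and hence vanish at a finite stage by the descending chain condition on closed subsets of the Noetherian space $\mathrm{Spec}\,R$, after which ``finitely presented and locally free implies projective'' concludes --- is a standard and valid way to obtain the bound $N$ (depending on $M$, as it must, since $gd(R)$ can be infinite here). Every lemma you invoke (exactness of localization commuting with syzygies, $pd(N)\le i$ iff the $i$-th syzygy is projective, projective $=$ free over a local ring for finitely generated modules) is standard for Noetherian rings. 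The proof is complete.
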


\n
If any of these conditions holds, the ring $R$ is said to be {\em regular}. For those rings, $gd(R)=\dim R.$

\begin{theorem}(\cite[5.95]{La})
Let $R$ be a commutative noetherian ring. For any $m$ maximal ideals, $gd(R_m)=pd(R/m)$ and
\[
gd(R) = sup \{gd(R_m)\}
\]
where $m$ ranges over all maximal ideals $m$ in $R.$
\end{theorem}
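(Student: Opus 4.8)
The plan is to reduce the statement to two classical facts: that the global dimension of a noetherian \emph{local} ring is computed by the projective dimension of its residue field, and that over a noetherian ring the functor $\Ext$ against a finitely generated module commutes with localization. Everything else is formal, using Proposition \ref{prext} and the reduction of global dimension to finitely generated modules.

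\textbf{Step 1 (the local statement).} First I would show that for a noetherian local ring $(S,\mathfrak n,\kappa)$, with $\kappa=S/\mathfrak n$, one has $gd(S)=pd_S(\kappa)$. The inequality $pd_S(\kappa)\le gd(S)$ is immediate, so suppose $pd_S(\kappa)=n<\infty$. Every finitely generated $S$-module $M$ admits a \emph{minimal} free resolution $\cdots\to F_1\to F_0\to M\to 0$, with the $F_i$ finite free and each differential carrying $F_{i+1}$ into $\mathfrak n F_i$; tensoring with $\kappa$ annihilates the differentials, so $\mathrm{Tor}^S_i(M,\kappa)\cong F_i\otimes_S\kappa$. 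Since $pd_S(\kappa)\le n$ forces $\mathrm{Tor}^S_{n+1}(M,\kappa)\cong\mathrm{Tor}^S_{n+1}(\kappa,M)=0$, Nakayama gives $F_{n+1}=0$, hence $pd_S(M)\le n$. Combined with the reduction (Auslander; see \cite{Co2}) that for a left noetherian ring $gd(S)=\sup\{pd_S(M):M\text{ finitely generated}\}$ — itself a consequence of a Baer-type criterion for injective dimension together with Proposition \ref{prext} — this yields $gd(S)=n=pd_S(\kappa)$. Applying this with $S=R_m$, and noting that $R/m$ localizes to $\kappa(m)$ at $m$ and to $0$ at every other maximal ideal (so that $pd(R/m)$ is unambiguous, whether read over $R$ or over $R_m$), gives the first assertion $gd(R_m)=pd(R/m)$.

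\textbf{Step 2 ($\sup_m gd(R_m)\le gd(R)$).} Since $R\to R_m$ is flat, localizing a projective $R$-resolution of an $R$-module $N$ produces a projective $R_m$-resolution of $N_m$, so $pd_{R_m}(N_m)\le pd_R(N)$. Taking $N$ to be an arbitrary $R_m$-module, viewed over $R$ (so $N_m=N$), yields $gd(R_m)\le gd(R)$, and hence the inequality.

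\textbf{Step 3 (the reverse inequality).} For a finitely generated — hence finitely presented — module $M$ over the noetherian ring $R$, I would use that $\Ext^i_R(M,N)\otimes_R R_m\cong\Ext^i_{R_m}(M_m,N_m)$ for every $R$-module $N$ and every maximal ideal $m$. As an $R$-module vanishes iff all its localizations at maximal ideals vanish, $\Ext^{n+1}_R(M,-)$ is identically zero iff $\Ext^{n+1}_{R_m}(M_m,-)$ is identically zero for each $m$; by Proposition \ref{prext} this reads $pd_R(M)=\sup_m pd_{R_m}(M_m)$. Feeding this into Auslander's reduction for $R$,
\[
gd(R)=\sup_{M\ \text{f.g.}}pd_R(M)=\sup_{M\ \text{f.g.}}\ \sup_m pd_{R_m}(M_m)=\sup_m\ \sup_{M\ \text{f.g.}}pd_{R_m}(M_m),
\]
and for each fixed $m$ the inner supremum is $\le gd(R_m)$ while, taking $M=R/m$, it is $\ge pd_{R_m}(\kappa(m))=gd(R_m)$ by Step 1. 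Therefore $gd(R)=\sup_m gd(R_m)$.

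The only place that needs a genuine argument rather than localization bookkeeping is Step 1 — the passage from "$\kappa$ has finite projective dimension $n$" to "every finitely generated module has projective dimension $\le n$" — which is exactly where minimal free resolutions and Nakayama's lemma over the local ring do the work; the remaining steps are formal manipulations with Proposition \ref{prext} and the localization behaviour of $\Ext$.
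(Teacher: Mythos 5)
Your proof is correct. Note that the paper itself offers no proof of this statement --- it is quoted verbatim with a citation to Lam --- so there is nothing internal to compare against; your argument (minimal free resolutions plus Nakayama to get $gd(S)=pd_S(\kappa)$ over a noetherian local ring, Auslander's reduction to finitely generated modules, and the compatibility of $\Ext$ of a finitely presented module with localization to globalize) is precisely the standard textbook route, and all the steps, including the observation that $pd(R/m)$ may be read over $R$ or over $R_m$ interchangeably, are sound.
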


\n
These results imply the following

\begin{corollary}\label{fingdreg}
Let $R$ be a commutative noetherian ring. If $gd(R)< \infty,\,$ then $R$ is regular.
\end{corollary}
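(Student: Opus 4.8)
The plan is to read this off directly from the two theorems of Lam quoted just above, together with Serre's theorem. First I would note that the very definition of global dimension as the supremum of $pd(M)$ over all $R$-modules $M$ gives the inequality $pd(M)\le gd(R)$ for every $R$-module $M$. Hence if $gd(R)<\infty$, then $pd(M)<\infty$ for every finitely generated $R$-module $M$; this is exactly condition (5) in Theorem \cite[5.94]{La}.

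By the equivalence asserted in that theorem, condition (5) implies condition (2), namely that $R_m$ is a regular local ring for every maximal ideal $m$ of $R$. Unwinding the definition of a regular (not necessarily local) noetherian ring — a ring all of whose localizations at maximal, equivalently prime, ideals are regular local rings — this says precisely that $R$ is regular, which is the desired conclusion.

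A slightly more informative route, which I would actually present, runs through the localization formula: by Theorem \cite[5.95]{La} one has $gd(R)=\sup_m gd(R_m)$ with $gd(R_m)=pd(R/m)$, the supremum ranging over the maximal ideals $m$ of $R$; thus $gd(R)<\infty$ forces $gd(R_m)<\infty$ for each such $m$, and Serre's theorem (\cite[Chap.IV.D.]{Se}) then identifies each $R_m$ as a regular local ring, with $gd(R_m)=\dim R_m$. Applying condition (2) of Theorem \cite[5.94]{La} once more yields regularity of $R$ (and, as a bonus, $gd(R)=\dim R$). There is no genuine obstacle here: the statement is a formal consequence of material already recorded, and the only point requiring a little care is the passage from ``$R_m$ regular for every maximal ideal $m$'' to ``$R$ regular'', which is a matter of unwinding the definition of regularity in the non-local case rather than a substantive argument.
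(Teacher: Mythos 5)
Your proof is correct and matches the paper's intent exactly: the paper offers no argument beyond ``These results imply the following,'' and your deduction --- $gd(R)<\infty$ gives $pd(M)<\infty$ for all finitely generated $M$, which is condition (5) of the quoted Theorem \cite[5.94]{La}, whose equivalence with condition (2) is precisely the stated definition of regularity --- is the intended one. The alternative route via Theorem \cite[5.95]{La} and Serre's theorem is a harmless embellishment.
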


\n

\subsubsection{Dimension zero}The rings of global dimension zero are the semisimple ones, since for these rings any module is projective (see \cite[Theorem 5.2.7.]{Co1}) The free algebra $F$ has dimension zero.

\subsubsection{Dimension one - Hereditary algebras}\label{hereditary} The global dimension of a ring $A$ is less or equal than $1$ if and only if all (left) modules have projective resolutions of length at most $1$. These rings are called {\em hereditary}.
Hereditary rings are for example semisimple rings, principal ideal domains, ring of upper triangular matrices over a division ring. In the commutative case, a hereditary domain is precisely a Dedekind domain (see \cite[Section 10.5]{Co1}). Thus, the coordinate ring of an affine smooth curve is hereditary.

\subsubsection{Path algebras}
A quiver $Q$ is a finite directed graph, eventually with multiple edges and self-loops. A path $p$ is a sequence $a_1a_2...a_m$ of arrows such that $ta_i=ha_{i+1}$ for $i=1,\dots m-1,\,$ where $ta$ and $ha$ denote the tail and the end of an arrow $a.\,$ For every vertex $x$ in the quiver we define the trivial path $i_x.\,$ The path alegbra $kQ$ of the quiver is the vector space spanned by all paths in the quiver. Multiplication is defined by
\[
p \cdot q  \, := \,
\begin{cases}
pq &\mbox{if} \;\; tp = hq \\
0 & \mbox{otherwise}
\end{cases}
\]
for two paths $p,\,q.\,$ It is easy to see that the path alegbra of a quiver is finite dimensional if and only if the quiver does not contain self-loops and oriented cycles, i.e. paths $p$ such that $t_p=h_p .\,$
There exists a standard resolution of length one for modules over a path algebra (see \cite{C-B2}). Therefore these algebras are hereditary.

\smallskip
\n

\section{Smooth algebras}\label{smalg}

\n
The notion of "smoothness" for an algebra $A\in \mathcal{N}_k $ is not well-established, yet.
We recall here different kinds of definitions which could provide a generalization for this notion in the non-commutative setting. We address to \cite{G1,G2,Ko} for further readings.

 We quote the known results on the smoothness of $\ran$ given by the "smoothness" of $A.\,$ Finally, we present a not known result for $A$ of global dimension one using the concept of "coherent algebra".

\subsection{Formally smooth algebras}
The definition of formally smooth (or quasi-free) algebras goes back to J. Cuntz and D. Quillen and it is intended to give a generalization for the notion of free associative algebra.

\begin{definition}(Definition 3.3. \cite{C-Q,Sc}). \label{fsm}
An associative algebra $A \in \mathcal N _k$ is said to be \textit{formally smooth} (or  \textit {quasi-free}) if it satisfies the equivalent conditions:

\smallskip
\n
i)
any homomorphism $ \varphi \in \Hom_{\mathcal N _k}(A,R/N)$ where $N$ is a nilpotent (bilateral) ideal in an algebra $R \in \mathcal N _k,\,$ can be lifted to a homomorphism $\overline{\varphi} \in \Hom_{\mathcal N _k}(A,R)\,$ that commutes with the projection
$R \rightarrow R/N .$

\n
ii) $\Ext^2_{A-bimod}(A,M)=0$ for any $A-$bimodule $M.$

\n
iii) the kernel $\Omega ^1 _A$ of the  multiplication $A \otimes A \rightarrow A $ is projective in $A-bimod .\,$
\end{definition}

\begin{remark}
It is worth to stress here that if we consider $A \in Ob(\mathcal C _k)$ and $ \Hom_{\mathcal C _k}(A,-)$ in the above, we obtain the classical definition of regularity in the commutative case (see \cite[Proposition 4.1.]{LB}).
On the other hand, if we
ask for a commutative algebra $A$  to be formally smooth in the category $\mathcal N _k$ we obtain regular algebras of dimension $\leq 1\,$ only (\cite[Proposition 5.1.]{C-Q}).
\end{remark}

\smallskip
\n
\begin{remark}
When $A$ is commutative $\Omega ^1 _A$ is nothing but the module of the Kahler differentials (see \cite[Section 8]{G1}).
\end{remark}

\smallskip
\noindent
\begin{example} Here are a few examples of formally smooth algebras (see \cite[19.2]{G1}).
\begin{enumerate}
\item The free algebra $F.$
\item The matrix algebra $M_n(k).$
\item The path algebra of a quiver.
\item The function ring $k[X],\,$ where $X$ is a smooth affine curve.
\end{enumerate}
\end{example}

\smallskip
\noindent
Actually, formally smooth algebras form a very restricted class, as we see in what follows.

\smallskip
\noindent
\begin{lemma}(\cite[Lemma 19.1.6.]{G1}\label{cq})
A formally smooth algebra is hereditary.
\end{lemma}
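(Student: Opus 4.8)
The plan is to derive $gd(A)\le 1$ from characterization (iii) of Definition \ref{fsm}, using that over the field $k$ every $k$-module, in particular $A$ itself and every $A$-module, is free. By Definition \ref{fsm}(iii) the kernel $\Omega^1_A$ of the multiplication $\mu\colon A\otimes_k A\to A$ is projective as an $A$-bimodule, so
\[
0\longrightarrow \Omega^1_A\longrightarrow A\otimes_k A\xrightarrow{\ \mu\ } A\longrightarrow 0
\]
is a projective resolution of $A$ in $A\text{-}\bimod$ of length one; this is the input that makes the whole argument work.

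The key observation is that restriction of scalars along $a\mapsto a\otimes 1$ sends the free bimodule $A\otimes_k A$ to a free left $A$-module (a $k$-basis of the second tensor factor is a left $A$-basis) and hence sends projective bimodules to projective left $A$-modules; symmetrically on the right. In particular $\Omega^1_A$ is projective both as a left and as a right $A$-module, so the displayed sequence splits both as a sequence of left $A$-modules and as a sequence of right $A$-modules. Now let $M$ be an arbitrary left $A$-module. Applying $-\otimes_A M$ to the displayed sequence is harmless because it is split on the right, and it produces the exact sequence
\[
0\longrightarrow \Omega^1_A\otimes_A M\longrightarrow A\otimes_k M\longrightarrow M\longrightarrow 0 .
\]
Here $A\otimes_k M$ is a free left $A$-module since $M$ is $k$-free, and $\Omega^1_A\otimes_A M$ is a direct summand of a module of the form $(A\otimes_k M)^{(I)}$, hence projective over $A$. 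Thus every left $A$-module has projective dimension at most $1$, so by Proposition \ref{prext} we get $gd(A)\le 1$, i.e. $A$ is hereditary.

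The only delicate point is the one-sided splitting: it is exactly what guarantees that $-\otimes_A M$ stays exact and preserves projectivity, and it relies on $k$ being a field (more generally on $A$ being $k$-flat). Equivalently, one can package the argument purely cohomologically via the natural isomorphism $\Ext^n_{A\text{-}\bimod}(A,\Hom_k(M,N))\cong\Ext^n_A(M,N)$, valid because $M$ is $k$-projective, which converts the vanishing $\Ext^2_{A\text{-}\bimod}(A,-)=0$ from Definition \ref{fsm}(ii) directly into $\Ext^2_A(M,N)=0$ for all left $A$-modules $M,N$, and then Proposition \ref{prext} again gives $gd(A)\le 1$.
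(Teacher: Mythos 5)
Your argument is correct in substance, and it is essentially the standard Cuntz--Quillen/Ginzburg proof. Note that the paper does not prove this lemma at all --- it only cites \cite[Lemma 19.1.6]{G1} --- so there is no internal proof to compare against; you have supplied the argument that the cited reference contains. Both of your packagings are valid: the explicit one, which tensors the length-one projective bimodule resolution $0\to\Omega^1_A\to A\otimes_k A\to A\to 0$ against an arbitrary left module $M$, and the purely cohomological one via $\Ext^n_{A^e}(A,\Hom_k(M,N))\cong\Ext^n_A(M,N)$ for $k$-projective $M$; the latter is the same Cartan--Eilenberg isomorphism the paper itself invokes in Section \ref{defsmooth} (there with $N=M$), so it meshes well with the rest of the text.

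One justification should be repaired, though it does not affect the conclusion. You deduce the one-sided splittings of $0\to\Omega^1_A\to A\otimes_k A\to A\to 0$ from the projectivity of the kernel $\Omega^1_A$; but projectivity of the kernel does not in general force a short exact sequence to split (compare $0\to\ZZ\xrightarrow{\ 2\ }\ZZ\to\ZZ/2\to 0$). The splitting as right (resp.\ left) $A$-modules holds for the more elementary reason that the quotient $A$ is free as a one-sided module: concretely, $a\mapsto 1\otimes a$ is a right $A$-linear section of the multiplication, and $a\mapsto a\otimes 1$ is a left $A$-linear one. (In fact, for the exactness of $-\otimes_A M$ you need even less, namely $\mathrm{Tor}_1^A(A,M)=0$, which is automatic.) With that fixed, the rest is airtight: $A\otimes_k M$ is $A$-free because $M$ is $k$-free, $\Omega^1_A\otimes_A M$ is a direct summand of a free left $A$-module because $\Omega^1_A$ is a direct summand of a free bimodule, hence $pd_A(M)\le 1$ for every $M$ and $gd(A)\le 1$ by Proposition \ref{prext}.
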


\begin{remark} From Lemma \ref{cq} follows that the polynomial algebra $k[x_1,\dots,x_n]$ is not formally smooth for $n > 1.\,$
In general, the tensor product of two formally smooth algebras is not formally smooth.
\end{remark}

\begin{proposition}(\cite{C-Q})\label{herform}
If $k=\mathbb C$ and $A$ is a finite-dimensional hereditary algebra, then $A$ is formally smooth.
\end{proposition}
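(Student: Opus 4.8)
\smallskip
\noindent
The plan is to verify condition (ii) of Definition~\ref{fsm}: that $\Ext^2_{A\text{-}\bimod}(A,M)=0$ for every $A$-bimodule $M$. Writing $A^e=A\otimes_k A^{\op}$ and identifying $A$-bimodules with left $A^e$-modules, so that $\Ext^*_{A\text{-}\bimod}(A,-)=\Ext^*_{A^e}(A,-)$ is Hochschild cohomology, Proposition~\ref{prext} (applied to the ring $A^e$ and the module $A$) reduces this to showing that the projective dimension of $A$ as a left $A^e$-module --- the Hochschild dimension of $A$ --- is at most $1$.

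\smallskip
\noindent
One inequality holds over any field: $gd(A)\le pd_{A^e}(A)$. Indeed, tensoring a projective $A^e$-resolution of $A$ over $A$ with an arbitrary left $A$-module $N$ produces a projective $A$-resolution of $N$ of the same length, since the resolution splits as a complex of right $A$-modules ($A$ being free as a right $A$-module) and each term becomes a direct summand of a free left $A$-module ($N$ being a $k$-vector space). The point is the reverse inequality, and this is precisely where the hypothesis $k=\CC$ enters: being algebraically closed and $A$ finite dimensional, the semisimple quotient $S:=A/\mathrm{rad}(A)\cong\prod_i M_{n_i}(\CC)$ is a \emph{separable} $k$-algebra; moreover $A$ is Artinian, hence Noetherian, so its left and right global dimensions agree, and both equal the common value $\le 1$ coming from hereditariness. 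Granting the classical fact that $pd_{A^e}(A)=gd(A)$ for any finite-dimensional algebra whose semisimple quotient is separable, we conclude $pd_{A^e}(A)\le 1$, hence $\Ext^2_{A^e}(A,M)=0$ for all $M$, so $A$ is formally smooth.

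\smallskip
\noindent
The only genuinely non-formal step is the equality $pd_{A^e}(A)=gd(A)$ for $A$ split over its separable semisimple quotient $S$; this fails over imperfect fields already for $A$ an inseparable field extension (compare Lemma~\ref{cq} and the remark following Definition~\ref{fsm}), so some input of this kind is unavoidable. I would obtain the needed inequality $pd_{A^e}(A)\le gd(A)$ by going through the $S$-relative Hochschild cohomology $\Ext^*_{A\otimes_S A^{\op}}(A,-)$. Since $S$ is semisimple, $A$ is projective as a left and as a right $S$-module, so the $S$-relative bar complex
\[
\cdots\longrightarrow A\otimes_S A\otimes_S A\longrightarrow A\otimes_S A\longrightarrow A\longrightarrow 0
\]
consists of projective $A\otimes_S A^{\op}$-modules, and a change-of-rings argument using the Wedderburn--Malcev splitting $A=S\oplus\mathrm{rad}(A)$ (available because $S$ is separable) yields $pd_{A\otimes_S A^{\op}}(A)\le gd(A)$. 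Finally, a separability idempotent $e=\sum_j u_j\otimes v_j\in S\otimes_k S$ gives a section $a\otimes_S b\mapsto\sum_j au_j\otimes_k v_j b$ of the canonical surjection $A^e\twoheadrightarrow A\otimes_S A^{\op}$ of $A$-bimodules, so $A\otimes_S A^{\op}$ is an $A^e$-direct summand of $A^e$; restricting a length-$\le1$ projective $A\otimes_S A^{\op}$-resolution of $A$ along $A^e\to A\otimes_S A^{\op}$ then gives one over $A^e$, whence $pd_{A^e}(A)\le pd_{A\otimes_S A^{\op}}(A)\le gd(A)\le 1$. The main obstacle is exactly this passage from a length-$1$ resolution by one-sided modules to a length-$1$ resolution by bimodules. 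A shortcut around it would be Gabriel's theorem --- over an algebraically closed field a finite-dimensional hereditary algebra is Morita equivalent to the path algebra $kQ$ of a finite quiver $Q$ without oriented cycles --- combined with the Morita invariance of formal smoothness and the formal smoothness of path algebras noted among the examples above.
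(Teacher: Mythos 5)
The paper gives no proof of this proposition: it is quoted from Cuntz--Quillen, and in the text it is immediately subsumed by the Crawley-Boevey criterion stated right after it (a finite-dimensional algebra is formally smooth iff it is hereditary and $A/\mathrm{rad}\,A$ is separable over $k$), which over $\CC$ reduces to hereditariness because $A/\mathrm{rad}\,A\cong\prod_i M_{n_i}(\CC)$ is automatically separable. Your argument is in effect a proof of the nontrivial half of that criterion: you reduce formal smoothness, via characterization (ii) of Definition~\ref{fsm} and Proposition~\ref{prext} applied to the ring $A^e$, to the bound $pd_{A^e}(A)\le 1$, and then invoke the classical equality $pd_{A^e}(A)=gd(A)$ for finite-dimensional algebras whose semisimple quotient is separable (Eilenberg; Cartan--Eilenberg, Ch.~IX). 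The formal parts of your argument are sound --- in particular the separability idempotent does realize $A\otimes_S A^{\op}$ as an $A^e$-direct summand of $A^e$, so projective $A\otimes_S A^{\op}$-modules restrict to projective $A^e$-modules. The one step you do not actually prove, the inequality $pd_{A\otimes_S A^{\op}}(A)\le gd(A)$ ``by a change-of-rings argument using Wedderburn--Malcev,'' is precisely the content of the classical theorem you are granting, so as written the argument is circular at that point unless you import that theorem as a cited black box (which is legitimate, and you flag it honestly). Given the paper's own toolkit, your Gabriel-theorem shortcut is arguably the cleanest route: path algebras of quivers appear among the paper's examples of formally smooth algebras, and formal smoothness is Morita invariant by characterization (ii) since Hochschild cohomology with arbitrary bimodule coefficients is a Morita invariant --- though both Gabriel's theorem and this invariance would themselves need citations, as neither is established in the paper either.
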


\smallskip
\n
In the general case the situation is more complicated, as can be seen in the following

\begin{proposition}\cite[Proposition 8.5.]{C-B3}
A finite-dimensional algebra $A$ is formally smooth iff it is hereditary and $A/rad A$ is separable over $k.$
\end{proposition}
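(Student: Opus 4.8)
The plan is to recast formal smoothness as a homological condition on $A$ and then to measure the gap between that condition and mere heredity. Put $S:=A/\mathrm{rad}\,A$ and $A^{e}:=A\otimes_{k}A^{\op}$, so that the Hochschild cohomology of $A$ with coefficients in an $A$-bimodule $M$ is $\Ext^{*}_{A^{e}}(A,M)$, and write $\mathrm{Hdim}(A):=pd_{A^{e}}(A)$ for the Hochschild cohomological dimension. Condition (ii) of Definition \ref{fsm} says exactly that $\Ext^{2}_{A^{e}}(A,M)=0$ for every bimodule $M$, so by Proposition \ref{prext} (applied to the ring $A^{e}$ and the module $A$) formal smoothness of $A$ is equivalent to $\mathrm{Hdim}(A)\le 1$. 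The proof then splits into showing, for finite-dimensional $A$, that $\mathrm{Hdim}(A)<\infty$ forces $S$ to be separable, and that when $S$ is separable the Hochschild and ordinary global dimensions agree.

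For the implication ``$\Rightarrow$'', suppose $A$ is formally smooth. That $A$ is hereditary, i.e.\ $gd(A)\le 1$, is precisely Lemma \ref{cq}. For separability of $S$ I would use $\mathrm{Hdim}(A)\le 1<\infty$ together with the principle that a finite-dimensional $k$-algebra with non-separable semisimple quotient has infinite Hochschild cohomological dimension: if $S$ is not separable then $S\otimes_{k}S^{\op}$ fails to be semisimple, whence $\mathrm{Hdim}(S)=pd_{S\otimes_{k}S^{\op}}(S)=\infty$, and one propagates this lower bound from $S$ to $A$ along the surjection $A\twoheadrightarrow S$ with nilpotent kernel $\mathrm{rad}\,A$ by a change-of-rings spectral sequence. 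This transfer is the step I expect to be the crux of the argument; it is also the only place where the standing hypothesis ``$k$ algebraically closed'' of the rest of the paper trivialises matters, since over an algebraically closed field every finite-dimensional semisimple algebra is a product of matrix algebras and hence separable.

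For the implication ``$\Leftarrow$'', suppose $A$ is hereditary and $S$ is separable over $k$. Then separability of $S$ is exactly the hypothesis under which the Cartan--Eilenberg comparison applies and gives $\mathrm{Hdim}(A)=gd(A)$; since $A$ is hereditary this is $\le 1$, so $\Ext^{2}_{A^{e}}(A,M)=0$ for every $M$, which is condition (ii) of Definition \ref{fsm}. A more self-contained route avoids Cartan--Eilenberg: by the Wedderburn--Malcev theorem --- whose hypothesis is precisely separability of $S$ --- there is a subalgebra $S\subseteq A$ with $A=S\oplus\mathrm{rad}\,A$, and, $A$ being hereditary, one identifies $A$ with the tensor algebra $T_{S}(M)$ of the $S$-bimodule $M:=\mathrm{rad}\,A/(\mathrm{rad}\,A)^{2}$. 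One then checks the lifting property (i) of Definition \ref{fsm} for $T_{S}(M)$: given $\varphi\colon T_{S}(M)\to R/N$ with $N$ nilpotent, first lift $\varphi|_{S}$ to $\tilde\varphi_{S}\colon S\to R$ (possible because a separable algebra is formally smooth, being projective over its enveloping algebra); this makes $R$ an $S$-bimodule, and the induced $S$-bilinear map $M\to R/N$ lifts to an $S$-bilinear map $M\to R$ because the obstruction lies in $\Ext^{1}_{S\otimes_{k}S^{\op}}(M,N)=0$ ($S\otimes_{k}S^{\op}$ being semisimple); the universal property of the tensor algebra then assembles $\tilde\varphi_{S}$ and this lift into a homomorphism $T_{S}(M)\to R$ lifting $\varphi$.

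Summarising, the ingredients Lemma \ref{cq}, the homological reformulation of formal smoothness, Wedderburn--Malcev, and the formal smoothness of tensor algebras over a separable base are all routine; the genuinely delicate point, and the main obstacle, is the implication $\mathrm{Hdim}(A)<\infty\Rightarrow S$ separable, which demands real control over the behaviour of Hochschild cohomology under the passage from $A$ to $A/\mathrm{rad}\,A$.
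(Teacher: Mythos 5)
First, a point of reference: the paper does not prove this proposition at all --- it is quoted from Crawley-Boevey \cite[Proposition 8.5]{C-B3} --- so there is no in-paper argument to measure yours against. Your overall architecture is the standard one: recasting condition (ii) of Definition \ref{fsm} via Proposition \ref{prext} (applied over $A^e$) as $pd_{A^e}(A)\le 1$, extracting heredity from Lemma \ref{cq}, and handling ``$\Leftarrow$'' by Wedderburn--Malcev together with the formal smoothness of tensor algebras over a separable base. The ``$\Leftarrow$'' half is essentially complete modulo classical inputs, though note that the identification $A\cong T_S(\mathrm{rad}\,A/(\mathrm{rad}\,A)^2)$ for a hereditary finite-dimensional $A$ with separable top is itself a nontrivial structure theorem; the Cartan--Eilenberg comparison $pd_{A^e}(A)=gd(A)$ that you mention first is the shorter of your two routes and needs no such identification.

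The genuine gap is exactly where you suspect it, in the implication $pd_{A^e}(A)<\infty\;\Rightarrow\;S=A/\mathrm{rad}\,A$ separable, and it is twofold. First, your intermediate claim that non-separability of $S$ forces $pd_{S^e}(S)=\infty$ (rather than merely $>0$) needs an argument; one is available --- $S^e$ is a symmetric, hence self-injective, algebra, over which finite projective dimension implies projectivity --- but you do not give it. Second, and more seriously, the ``propagation'' from $S$ to $A$ is not delivered by any change-of-rings spectral sequence in the direction you need: the standard spectral sequences attached to the surjection $A^e\twoheadrightarrow S^e$ estimate $\Ext_{A^e}$-groups \emph{from above} by data over $S^e$, whereas you need a \emph{lower} bound for $pd_{S^e}(S)$ in terms of $pd_{A^e}(A)$; and finiteness of Hochschild dimension certainly does not pass between $A$ and $A/\mathrm{rad}\,A$ in general (e.g.\ $A=k[x]/(x^2)$ has separable top but $pd_{A^e}(A)=\infty$). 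The statement you are invoking is precisely the Eilenberg--Nagao--Nakayama theorem that a finite-dimensional algebra of finite Hochschild dimension has separable semisimple quotient --- a substantive result with its own non-routine proof --- so as written your forward direction assumes what is arguably the hardest part of the proposition. (Over the algebraically closed $k$ of this paper the issue evaporates, as you observe, but then the statement collapses to Theorem \ref{finhersm}.)
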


\smallskip
\n
The formally smoothness of $A$ implies the smoothness of $\ran.$

\begin{theorem}\label{fsmran} (\cite[Proposition 19.1.4.]{G1}, \cite[Prop.6.3.]{LB}) If $A \in \nn _k$ is a formally smooth $k$-algebra, then $\ran$ is smooth for every $n$.
\end{theorem}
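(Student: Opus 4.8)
The plan is to verify the smoothness of $\ran$ by the infinitesimal lifting criterion, reading everything through the representability statement of Lemma \ref{rep}. Since $A=F/J$ is finitely generated, $V_n(A)=k[\xi_{lij}]/I$ is a finitely generated commutative $k$-algebra, so $\ran=\mathrm{Spec}\,V_n(A)$ is of finite type over $k$. For a scheme locally of finite presentation over $k$, smoothness is equivalent to formal smoothness, that is, to the requirement that for every commutative $k$-algebra $C$ and every square-zero ideal $N\subset C$ the reduction map $\ran(C)\to\ran(C/N)$ is surjective; by the usual reduction that writes a nilpotent ideal as an iterated extension of square-zero ones, it is equivalent (and enough) to ask this for an arbitrary nilpotent $N$. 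So the whole theorem comes down to proving this surjectivity.

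Next I would make the reduction map explicit. By Lemma \ref{rep} we have $\ran(C)\cong\Hom_{\nn_k}(A,M_n(C))$ naturally in $C$, and under this identification $\ran(C)\to\ran(C/N)$ is simply post-composition with $M_n(\mathrm{pr})\colon M_n(C)\to M_n(C/N)$, where $\mathrm{pr}\colon C\to C/N$ is the projection. Observe that $\ker M_n(\mathrm{pr})=M_n(N)$ is a two-sided ideal of $M_n(C)$, that it is nilpotent because $M_n(N)^r=M_n(N^r)$ so $N^r=0$ forces $M_n(N)^r=0$, and that $M_n(C)/M_n(N)\cong M_n(C/N)$ as $k$-algebras. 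In particular $M_n(C)\in\nn_k$ and $M_n(N)$ is a nilpotent bilateral ideal in it.

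Now the formal smoothness of $A$ finishes the job. Given $\rho\in\Hom_{\nn_k}(A,M_n(C/N))=\Hom_{\nn_k}\!\bigl(A,M_n(C)/M_n(N)\bigr)$, condition i) of Definition \ref{fsm}, applied with $R=M_n(C)$ and the nilpotent ideal $M_n(N)$, yields a lift $\overline{\rho}\in\Hom_{\nn_k}(A,M_n(C))$ compatible with the projection, i.e. a $C$-point of $\ran$ mapping to $\rho$. Hence $\ran(C)\to\ran(C/N)$ is surjective for every square-zero, and so for every nilpotent, extension, so $\ran$ is formally smooth, and therefore smooth, over $k$ for every $n$. I expect no real obstacle here: the only point that needs care is the standard equivalence, for morphisms locally of finite presentation, between smoothness and the lifting property against square-zero extensions. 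If one prefers to avoid invoking it, one can instead run a tangent--obstruction argument in the spirit of Section \ref{defsmooth}: the tangent space at a $k$-point $\rho$ is $Der_\rho(A,M_n(k))$ by Lemma \ref{tgran}, and the obstruction to extending infinitesimal deformations lies in $\Ext^2_{A-bimod}(A,M_n(k))$, which vanishes by condition ii) of Definition \ref{fsm}; but the one-line reduction to lifting algebra maps into $M_n(-)$ is cleaner and is really the heart of the matter.
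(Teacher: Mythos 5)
Your proposal is correct and is precisely the argument the paper sketches in its ``Idea of proof'': observe that $M_n(N)$ is a nilpotent ideal of $M_n(C)$ with quotient $M_n(C/N)$, identify $\ran(C)$ with $\Hom_{\nn_k}(A,M_n(C))$ via Lemma \ref{rep}, and apply the lifting property i) of Definition \ref{fsm} to conclude formal smoothness, hence smoothness, of the finite-type scheme $\ran$. You have merely filled in the routine details the paper omits, so no further comment is needed.
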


\smallskip
\n
{\em Idea of proof.} If $I \subset B$ is a nilpotent ideal, then so is $M_n(I) \subset M_n(B). \,$ Using this observation, the formally smoothness of $A$ is used to show that the coordinate ring $k[\ran]$ verifies the lifting property i) of Definition \ref{fsm}.\qed

\medskip
\n
If  $A$ is finite dimensional, there is the following
\begin{theorem} (\cite[Proposition 1]{Bo})\label{finhersm}.
If $A$ is finite-dimensional, the scheme $\ran$ is smooth for any $n$ if and only if $A$ is hereditary.
\end{theorem}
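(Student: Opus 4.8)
\smallskip
\noindent
The plan is to prove the two implications separately: ``hereditary $\Rightarrow$ smooth'' will be a short assembly of results already in hand, while the converse, due to Bongartz, is the substantial part and will be approached by localising $\ran$ at a well-chosen $k$-point.

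\smallskip
\noindent
\textbf{Hereditary $\Rightarrow$ smooth.} Since $k$ is algebraically closed, hence perfect, the semisimple algebra $A/\mathrm{rad}\,A$ is separable over $k$. By the proposition of Crawley-Boevey quoted above (\cite[Proposition 8.5]{C-B3}), a finite-dimensional hereditary algebra with separable semisimple quotient is formally smooth; Theorem~\ref{fsmran} then gives that $\ran$ is smooth for every $n$. (When $k=\CC$ one may instead invoke Proposition~\ref{herform} directly.) Nothing beyond this bookkeeping is needed for this half.

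\smallskip
\noindent
\textbf{Smooth for all $n$ $\Rightarrow$ hereditary.} I would argue by contraposition, so assume $\mathrm{gd}(A)\geq 2$. First I would reduce to simple modules: by Proposition~\ref{prext} together with a d\'evissage along composition series (the long exact $\Ext$-sequences propagate the vanishing of $\Ext^2_A(S,-)$ from the simple modules to all finite-dimensional ones), the inequality $\mathrm{gd}(A)\geq 2$ forces $\Ext^2_A(M_0,M_0)\neq 0$, where $M_0=A/\mathrm{rad}\,A$. Put $n=\dim_k M_0$ and let $\rho\colon A\to M_n(k)$ present the $A$-module $M_0$ on $k^n$; regarding $\End_k(M_0)$ as an $A$-bimodule through $\rho$, we have an isomorphism with Hochschild cohomology $H^2(A,\End_k(M_0))\cong\Ext^2_A(M_0,M_0)\neq 0$. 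Now I would localise $\ran$ at $\rho$. On the one hand, refining Lemma~\ref{tgran}, $T_\rho\ran\cong Z^1(A,\End_k(M_0))$ and the inner $\rho$-derivations form $B^1\cong\End_k(M_0)/\End_A(M_0)$, so
\[
\dim_k T_\rho\ran \;=\; n^2-\dim_k\End_A(M_0)+\dim_k\Ext^1_A(M_0,M_0).
\]
On the other hand, the functor of deformations of the homomorphism $\rho$ --- which by Lemma~\ref{rep} is pro-represented by the complete local ring of $\ran$ at $\rho$ --- has its obstructions in $H^2(A,\End_k(M_0))=\Ext^2_A(M_0,M_0)$. The aim is then to exhibit a first-order deformation $\rho+\varepsilon c$, with $c\in Z^1$, that does not lift modulo $\varepsilon^3$ --- i.e.\ a tangent vector carrying a non-zero obstruction in $\Ext^2_A(M_0,M_0)$; such a $c$ shows that $V_n(A)$ is singular at $\rho$, so $\ran$ is not smooth, against the hypothesis.

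\smallskip
\noindent
\textbf{The main obstacle.} The crux --- and where I expect the real difficulty --- is precisely this last step. The non-vanishing of $\Ext^2_A(M_0,M_0)$ does \emph{not} by itself produce a non-zero \emph{primary} obstruction: the quadratic map $c\mapsto[c\smile c]$ on $Z^1$, with values in $\Ext^2_A(M_0,M_0)$, can vanish identically while $\Ext^2$ does not (this already happens, e.g., for $A=k[x]/(x^3)$ at the one-dimensional module, where the obstruction surfaces only at third order). To handle this I would follow Bongartz, \cite[Proposition~1]{Bo}: either arrange the test module so that a non-split class of some $\Ext^2_A(S,S')$ is realised as a Yoneda product of two $\Ext^1$-classes, making the primary obstruction the operative one; or pass to the relevant higher Massey product and show it is non-zero; or else bound $\dim_\rho\ran$ from above by independent means --- via the orbit-closure stratification of $\ran$, or via the explicit equations cutting $\ran$ out of affine space coming from a finite presentation $A=F/J$ (which exists, $A$ being finite-dimensional) --- and compare with the tangent dimension displayed above. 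Establishing that this local dimension drops strictly below $\dim_k T_\rho\ran$ exactly when $\mathrm{gd}(A)\geq 2$ is the heart of Bongartz's theorem, which I would quote rather than reprove.
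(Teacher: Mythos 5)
Your proposal matches the paper's treatment: the paper gives no proof beyond citing Bongartz \cite[Proposition 1]{Bo} for the full equivalence, and notes as an "alternative" exactly your route for the forward implication (Proposition~\ref{herform} together with Theorem~\ref{fsmran}). Your version is if anything slightly more careful --- you observe that over an algebraically closed (hence perfect) field one should invoke \cite[Proposition 8.5]{C-B3} rather than the $k=\CC$ statement, and you correctly identify that the converse (including the subtlety that $\Ext^2_A(M,M)\neq 0$ need not produce a nonzero primary obstruction) is the substantial content, which you, like the paper, defer to Bongartz.
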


\smallskip
\n
This result is proved directly in \cite{Bo}. Alternatively, one can use Proposition\ref{herform} and Theorem\ref{fsmran}.

\smallskip
\n
\subsection{Coherent algebras}\label{cohalg}

\n
Let $A^{\op}$ the opposite algebra of $A$ and denote with $A^e:=A \otimes A^{\op}.\,$ There is a canonical isomorphism $(A^e)^{\op} \cong A^e,\,$ so an $A-$bimodule is the same as a left $A^e$-module.

\smallskip
\n
\begin{definition}(see \cite{G2})
An algebra $A$ is said to be {\it coherent} if the kernel of any morphism between finite
rank free $A$-bimodules is finitely generated.
\end{definition}

\n
There is the following

\begin{lemma}(see \cite[9.1.1]{G2})\label{freeres}
A coherent algebra $A$ admits a resolution by finite rank free $A$-bimodules.
\end{lemma}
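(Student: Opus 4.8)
The statement to prove is Lemma~\ref{freeres}: a coherent algebra $A$ admits a resolution by finite rank free $A$-bimodules. The plan is to build such a resolution step by step, using the fact that $A$ itself is finitely generated as an $A$-bimodule (since $A\cong F/J$ is a finitely generated $k$-algebra, it is generated as an $A^e$-module by a single element, namely $1$) together with the defining coherence property to keep every syzygy finitely generated. The key observation is that an $A$-bimodule is the same thing as a left $A^e$-module, so everything reduces to constructing a resolution of $A$ in $A^e\text{-}\md$ by finite rank free modules.

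First I would produce the term $P_0$: since $A$ is generated as an $A^e$-module by finitely many elements (in fact by $1$), there is a surjection $P_0:=(A^e)^{r_0}\twoheadrightarrow A$ from a finite rank free $A$-bimodule. Let $K_1$ be the kernel. By the coherence hypothesis, the kernel of any morphism between finite rank free $A$-bimodules is finitely generated; applying this to the morphism $P_0\to 0$ — or more precisely observing that $K_1=\ker(P_0\to A)$ and that coherence propagates to submodules of finitely generated free modules — we get that $K_1$ is finitely generated. Hence there is a surjection $P_1:=(A^e)^{r_1}\twoheadrightarrow K_1$ with $P_1$ of finite rank, giving $P_1\to P_0\to A\to 0$ exact. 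Now set $K_2:=\ker(P_1\to K_1)=\ker(P_1\to P_0)$. Since $P_1\to P_0$ is a morphism between finite rank free $A$-bimodules, coherence gives directly that $K_2$ is finitely generated, and we repeat. Iterating, we obtain an exact complex $\cdots\to P_2\to P_1\to P_0\to A\to 0$ with each $P_i$ a finite rank free $A$-bimodule, which is the desired resolution.

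The main point to get right — and the only place where coherence does real work — is the inductive step: one must know that at stage $i$ the syzygy $K_{i+1}=\ker(P_i\to P_{i-1})$ is finitely generated so that it can be covered by a finite rank free module $P_{i+1}$. For $i\ge 1$ this is literally the definition of coherence, since $P_i\to P_{i-1}$ is a map of finite rank free $A$-bimodules. For $i=0$ one needs the slightly separate remark that the kernel of a surjection from a finite rank free module onto a \emph{finitely generated} module is again finitely generated in a coherent setting; this follows by realizing $A$ itself as a cokernel and applying coherence, or by the standard fact that over a (bimodule-)coherent ring the category of finitely presented modules is abelian. I expect this $i=0$ bookkeeping, i.e.\ checking that $A$ is finitely \emph{presented} and not merely finitely generated as an $A^e$-module, to be the subtle point; it is harmless here because $A=F/J$ and one can write down an explicit finite presentation, but it should be stated carefully rather than glossed over.
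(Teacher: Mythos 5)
Your proof is correct and takes essentially the same route as the paper: cover $A$ by $A^e$ via the multiplication map, check that the first syzygy is finitely generated, and then let coherence supply finite generation of every higher syzygy as the kernel of a map between finite rank free $A$-bimodules. The point you flag as subtle is exactly where the paper's proof quietly asserts that $\Omega^1_A=\ker(A^e\to A)$ is a finitely generated $A^e$-module (it is generated by the elements $x_i\otimes 1-1\otimes x_i$ for the finitely many algebra generators $x_i$, so $A$ is indeed finitely presented over $A^e$); your discarded side remarks about applying coherence to $P_0\to 0$ or about coherence passing to arbitrary submodules are not valid justifications, but the explanation you settle on is.
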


\begin{proof}
\n
Consider the kernel $\Omega ^1 _A$ of the  multiplication $m:A \otimes A \rightarrow A^{op} .\,$ It  is a finitely generated $A^e$-module. Therefore there exists a surjective $A^e$-module homomorphis
$
F_0 \twoheadrightarrow \Omega ^1 _A \,
$
where $F_0$ is a free $A^e$-module of finite rank. Being $A$ coherent, the kernel of the map
\[
F_0 \longrightarrow A \otimes A^{op}
\]
will be finitely generated. Thus there exists $F_1$ free $A^e-$module of finite rank and an exact sequence $F_1 \rightarrow F_0 \rightarrow A \otimes A  \rightarrow A .\,$ Continuing in this way, we
obtain an exact sequence
\[
\dots \longrightarrow F_i \longrightarrow \dots \longrightarrow F_1 \longrightarrow F_0 \longrightarrow A \otimes A  \longrightarrow A \longrightarrow 0
\]
where all the $F_i$'s are finite rank free $A-$bimodules.
\end{proof}

\smallskip
\n
\begin{remark}\label{comcoh}
The class of formally smooth algebras and of coherent algebras are disjoint. For example, a formally smooth algebra must have global dimension $\leq 1$, see (Lemma \ref{cq}).
\end{remark}

\section{Deformations and smoothness}\label{defsmooth}

\n
Our aim here is to prove the smoothness of $\ran$ when $A$ is coherent and of global dimension one (hereditary).
We have seen in Prop-\ref{herform} that if $A$ is finite-dimensional, then it is hereditary if and only if it is formally smooth. In the infinite dimensional case this is no more true. We prove that $\ran$ is smooth if $A$ is coherent and of global dimension one.

\smallskip
\n
Let $\mu$ be a $k-$point in $\ran$ and let $M\cong k^n$ the associated $A-$module (see Example \ref{commuting} (1)). We'll prove that $\mu$ is smooth if the following condition holds:

\medskip
\n
{\it(*) The algebra $A$  is coherent and $\Ext^2_{A}(M,M)=0.$}

\medskip
\n
This rephrases a result of Geiss (see \cite[6.4.2.]{Ge}) which holds for finite dimensional algebras.

\smallskip
\n
Denote by $\cdot$ the multiplication on $A$ and by an element $\mu \in \Hom_k(A \otimes k^n , k^n )\, $ the multiplication on the $A-$module $M=k^n.\,$  If $R$ is a local commutative $k-$algebra, denote $A_R:=A \otimes R$ and $M_R:= M \otimes R.\,$ Define the deformations of $M$ in the following way

\begin{definition}(\cite[3.2.]{Ge})
A $R-${\it
deformation} of an $A-$module $M$ is given by an $R-$linear map  $\mu _R \in \Hom_R (A_R \otimes M_R,M_R)$ which reduces modulo $R$ to $\mu$ and satisfies
$$
\mu _R(id_{A_R}\otimes _R \mu _R)=\mu _R (\cdot \otimes id_{M_R}).
$$

\n
If $R=k[[t]],\,$ the ring of formal power series, we speak of {\it formal deformations}, if $R=k[\epsilon]:=k[t]/(t^2),\,$ the algebra of dual numbers, we speak of {\it infinitesimal deformations}.
\end{definition}

\n
\begin{definition}
An infinitesimal deformation $\mu_{k[\epsilon]}$ is called {\it integrable} if there exists a deformation $\mu _{k[[t]]}$ of $\mu$ which reduces to $\mu_{k[\epsilon]}$ via the natural projection $p_{t,\epsilon}:  k[[t]] \rightarrow k[\epsilon]$.
\end{definition}

\smallskip
\n
We prove our result using the following criterium for a point $x$ in a scheme $X$ to be smooth.

\n
\begin{lemma}(\cite[4.4.3.]{Ge})\label{crit}
Let $X$ be an algebraic scheme over $k$ and suppose that $x \in X(k)$
has an open neighborhood $U$ such that for all $y \in U(x)$ the following conditions hold:

\n
(i) For each $y' \in T_{U,y}$ we have $(U\cap(p_{t,\epsilon})^{-1}(y')) \neq \emptyset ,\, $  where $p_{t,\epsilon} : k[[t]] \mapsto k[\epsilon]$ is the canonical projection.

\n
(ii) $\dim T_{U,y} = \dim T_{U,x}.$

\n
Then $x$ is a smooth point of $X$.
\end{lemma}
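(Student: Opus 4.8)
The plan is to reduce the assertion to a property of the completed local ring at $x$ and then read that property off directly from hypothesis (i). Since $k$ is algebraically closed and $X$ is of finite type over $k$, the $k$-rational point $x$ is smooth if and only if $\mathcal{O}_{X,x}$ — equivalently its completion $R:=\widehat{\mathcal{O}}_{X,x}=\widehat{\mathcal{O}}_{U,x}$ — is a regular local ring, so it suffices to show that $R$ is a formal power series ring over $k$. Set $d=\dim_k T_{X,x}=\dim_k\mathfrak{m}_x/\mathfrak{m}_x^2$; by (ii) the same integer $d$ equals $\dim_k T_{U,y}$ for every $y\in U(k)$. Being a complete Noetherian local $k$-algebra with residue field $k$, by Cohen's structure theorem $R$ admits a surjection $\varphi\colon k[[z_1,\dots,z_d]]\to R$ obtained by lifting a $k$-basis of $\mathfrak{m}_x/\mathfrak{m}_x^2$, and minimality of this generating system forces $I:=\ker\varphi\subseteq\mathfrak{n}^2$, where $\mathfrak{n}=(z_1,\dots,z_d)$. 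Thus the whole problem comes down to proving $I=0$.

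The key step is to rewrite hypothesis (i) at $x$ in these terms. A formal arc in $U$ based at $x$, i.e.\ an element of $U(k[[t]])$ reducing to $x$, is the same as a local $k$-algebra homomorphism $\mathcal{O}_{U,x}\to k[[t]]$, which extends uniquely to a continuous homomorphism $R\to k[[t]]$; in coordinates this is a $d$-tuple $\gamma=(\gamma_1,\dots,\gamma_d)\in(t\,k[[t]])^d$ with $f(\gamma_1,\dots,\gamma_d)=0$ for all $f\in I$. Composing with $p_{t,\epsilon}$ sends $\gamma$ to the tangent vector $(\gamma_1'(0),\dots,\gamma_d'(0))\in T_{U,x}\cong k^d$. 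Hence condition (i), applied at $x$, says precisely that every $a\in k^d$ arises as $(\gamma_1'(0),\dots,\gamma_d'(0))$ for some such arc $\gamma$. I would then argue by contradiction: if $I\neq 0$, choose $0\neq f\in I$ of minimal $\mathfrak{n}$-adic order $r$; since $f\in\mathfrak{n}^2$ one has $r\geq 2$, and its leading form $f_r$ (the degree-$r$ homogeneous component) is a nonzero polynomial in $z_1,\dots,z_d$. As $k$ is infinite, pick $a\in k^d$ with $f_r(a)\neq 0$ and, by (i), an arc $\gamma$ with $(\gamma_1'(0),\dots,\gamma_d'(0))=a$. Substituting $\gamma_i=a_i t+O(t^2)$, each homogeneous component satisfies $f_j(\gamma(t))=f_j(a)\,t^j+O(t^{j+1})$, so $f(\gamma(t))=f_r(a)\,t^r+O(t^{r+1})\neq 0$ in $k[[t]]$, contradicting $f\in I$. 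Therefore $I=0$, $R\cong k[[z_1,\dots,z_d]]$ is regular, and $x$ is a smooth point of $X$. (In particular $R$ is a domain, so $X$ is automatically reduced near $x$; and running the same argument at each $y\in U(k)$, together with (ii), even shows that $U$ is smooth of pure dimension $d$.)

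The two translations at the start of the second paragraph — "formal arc at $x$" $=$ "continuous homomorphism $R\to k[[t]]$", and the existence of a minimal presentation of $R$ with relations in $\mathfrak{n}^2$ — are routine, as is the order estimate $f_j(\gamma(t))=f_j(a)t^j+O(t^{j+1})$. The one genuinely substantive point, and the step I expect to need the most care, is the final contradiction: it works only because (i) furnishes a formal arc lifting \emph{every} tangent direction, in particular a direction $a$ on which the leading form of a hypothetical nonzero relation does not vanish. This is exactly where the full strength of (i) — a statement about all of $T_{U,x}$, not merely a generic vector — and the infinitude of $k$ are used; and it is the place where, in the later application to $\ran$, the vanishing $\Ext^2_A(M,M)=0$ coming from coherence will be invoked to produce the arcs demanded by (i).
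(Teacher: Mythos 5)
Your argument is correct, but it is worth saying up front that the paper itself offers no proof of this lemma: it is quoted verbatim from Geiss's lecture notes \cite[4.4.3]{Ge}, so there is nothing in the text to compare against line by line. What you give is a complete, self-contained proof by a route that is certainly not the one the citation has in mind. You reduce to showing that $\widehat{\mathcal{O}}_{X,x}=k[[z_1,\dots,z_d]]/I$ has $I=0$, and you observe that the tangent direction of any formal arc through $x$ must annihilate the leading form of every element of $I$; since a nonzero leading form cannot vanish on all of $k^d$ ($k$ being infinite), hypothesis (i) at the single point $x$ already forces $I=0$. All the individual steps check out: the factorization of an arc through $\mathrm{Spec}\,\mathcal{O}_{X,x}$ and its completion, the minimal Cohen presentation with $I\subseteq\mathfrak{n}^2$, the identification of the arc's image under $p_{t,\epsilon}$ with $(\gamma_1'(0),\dots,\gamma_d'(0))$, and the order estimate $f(\gamma(t))=f_r(a)t^r+O(t^{r+1})$. (Equivalently: integrable tangent vectors lie in the reduced tangent cone, which is a proper subset of $T_{X,x}$ whenever $\mathcal{O}_{X,x}$ is non-regular, since its dimension equals $\dim\mathcal{O}_{X,x}<d$.) The notable feature — which you should be aware of, though it is not an error — is that your proof never uses hypothesis (ii) nor the neighborhood $U$: condition (i) at $x$ alone suffices. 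This makes your statement strictly stronger than the one quoted from Geiss, whose argument (following Geiss--de la Pe\~na) genuinely uses the constancy of $\dim T_{U,y}$ on a neighborhood; the trade-off is that their formulation is the one that matches how the hypotheses are actually verified in Theorem \ref{ness} of this paper (obstruction vanishing via $\Ext^2_A(M,M)=0$ for (i), semicontinuity plus the finite free resolution for (ii)), whereas your version shows that, once (i) is granted as a statement about lifting to honest $k[[t]]$-points, the second hypothesis is logically redundant.
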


\n
If $M$ is an $A-$module then $\End _k (M)\,$ is an $A-$bimodule. Consider the Hochschild cohomology of $A$ with coefficients in $\End_k(M).\,$ There is the following standard result

\begin{theorem}(\cite[Corollary 4.4.]{C-E})
For all $A-$module $M$
\[
\Ext ^i_{A}(M,M)\cong H^i_{A^e} (A, \End _k (M)).
\]
\end{theorem}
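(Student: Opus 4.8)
The plan is to recognise the right-hand side as a bimodule $\Ext$-group and then transport a projective bimodule resolution of $A$ down to a projective resolution of $M$ over $A$ by applying $-\otimes_A M$. Recall that by definition $H^i_{A^e}(A,\End_k(M))=\Ext^i_{A^e}(A,\End_k(M))$ is the $i$-th derived functor of $\Hom_{A^e}(A,-)$ evaluated at the $A$-bimodule $\End_k(M)=\Hom_k(M,M)$, whose left and right $A$-actions are $(a\cdot f)(x)=af(x)$ and $(f\cdot a)(x)=f(ax)$. So first I would fix a projective resolution $P_\bullet\to A\to 0$ by $A$-bimodules: the bar resolution always works, and when $A$ is coherent one may instead use the finite rank free resolution provided by Lemma \ref{freeres}, which is the case of interest in this paper. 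Then $\Ext^i_{A^e}(A,\Hom_k(M,M))=H^i\big(\Hom_{A^e}(P_\bullet,\Hom_k(M,M))\big)$.

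Next I would invoke the tensor--hom adjunction: for every $A$-bimodule $P$ there is an isomorphism, natural in $P$,
\[
\Hom_{A^e}\big(P,\Hom_k(M,M)\big)\ \xrightarrow{\ \cong\ }\ \Hom_A\big(P\otimes_A M,\,M\big),\qquad \phi\longmapsto\big(p\otimes m\mapsto\phi(p)(m)\big),
\]
where $P\otimes_A M$ is formed via the right $A$-action on $P$ and carries the left $A$-action coming from the left $A$-action on $P$. That this assignment is a well-defined bijection, and that it commutes with the differentials of $P_\bullet$, is routine; it yields an isomorphism of cochain complexes $\Hom_{A^e}(P_\bullet,\Hom_k(M,M))\cong\Hom_A(P_\bullet\otimes_A M,M)$.

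Finally I would check that $P_\bullet\otimes_A M\to M\to 0$ is a projective resolution of $M$ as a left $A$-module. Each $P_i$ is a direct summand of a direct sum of copies of $A\otimes_k A$, and $A\otimes_k A$ is free as a right $A$-module via its second factor, so every $P_i$ is flat over $A$ on the right; hence $-\otimes_A M$ preserves exactness of $P_\bullet\to A\to 0$, and in degree zero one recovers $A\otimes_A M=M$. Likewise $(A\otimes_k A)\otimes_A M\cong A\otimes_k M$ is free as a left $A$-module, so each $P_i\otimes_A M$ is projective in $A$-mod. Therefore $H^i\big(\Hom_A(P_\bullet\otimes_A M,M)\big)=\Ext^i_A(M,M)$, and stringing the three identifications together gives $\Ext^i_A(M,M)\cong H^i_{A^e}(A,\End_k(M))$.

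I expect the main obstacle to be precisely this last paragraph: confirming that applying $-\otimes_A M$ to a projective $A^e$-resolution of $A$ again produces a projective $A$-resolution of $M$, i.e.\ combining the flatness over $A$ of projective bimodules (so that exactness survives) with the projectivity over $A$ of the tensored terms. Everything else is the formal bookkeeping of the adjunction together with the identification of Hochschild cohomology with $\Ext$ over the enveloping algebra.
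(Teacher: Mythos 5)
Your proof is correct and is essentially the classical argument: the paper offers no proof of this statement, citing it verbatim from Cartan--Eilenberg, and your chain of identifications (the tensor--hom adjunction $\Hom_{A^e}(P,\Hom_k(M,M))\cong\Hom_A(P\otimes_A M,M)$ applied to a projective $A$-bimodule resolution of $A$, together with the observation that projective bimodules are right-$A$-flat and become projective left $A$-modules after $-\otimes_A M$) is exactly the standard proof found there. The only step worth spelling out is the one you flagged: flatness of the individual terms $P_i$ gives exactness of $P_\bullet\otimes_A M\to M\to 0$ only after the usual d\'evissage of the bounded-below acyclic complex into short exact sequences whose syzygies are again flat, so that $\mathrm{Tor}_1^A(-,M)$ vanishes where needed; with that routine remark added, the argument is complete.
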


\smallskip
\n
Now we are able to prove the following

\begin{theorem}\label{ness}
If $A$ is a coherent algebra and  $M\cong k^n$ is an $A-$module with $Ext^2(M,M)=0,\,$ then $M$ is a smooth point in $\ran.$
\end{theorem}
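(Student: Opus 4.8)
The plan is to verify the two hypotheses of the smoothness criterion Lemma~\ref{crit} for the point $\mu$, using the coherence of $A$ to control the situation in a whole neighborhood rather than just at $\mu$. First I would set up the relevant Hochschild complex: since $A$ is coherent, Lemma~\ref{freeres} gives a resolution $\cdots \to F_2 \to F_1 \to F_0 \to A\otimes A \to A \to 0$ of $A$ by \emph{finite rank} free $A$-bimodules. Applying $\Hom_{A^e}(-,\End_k(k^n))$ to this resolution yields a cochain complex $C^\bullet$ of finite-dimensional $k$-vector spaces computing $H^\bullet_{A^e}(A,\End_k(k^n))\cong\Ext^\bullet_A(M,M)$. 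The key point of finiteness is that each term $\Hom_{A^e}(F_i,\End_k(k^n))$ is a finite-dimensional $k$-vector space, and moreover, if we let the module structure vary — i.e. replace $\mu$ by a nearby point $y\in\ran(k)$ — the differentials of $C^\bullet$ vary algebraically (polynomially) in the coordinates of the representation. Thus the functions $y\mapsto \dim_k H^i(C^\bullet_y)$ are upper semicontinuous on $\ran$, and in particular $y\mapsto\dim T_{\ran,y}=\dim\Ext^1_{y}(M,M)$ and $y\mapsto\dim\Ext^2_{y}(M,M)$ are upper semicontinuous.

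Next I would use the hypothesis $\Ext^2(M,M)=0$ at $\mu$ together with semicontinuity: the locus $\{y:\Ext^2_y(M,M)=0\}$ is open and contains $\mu$, so there is an open neighborhood $U'$ of $\mu$ on which $\Ext^2$ vanishes identically. On such a locus the Euler characteristic argument applied to the truncated complex forces $\dim\Ext^1_y(M,M)=\dim C^1_y-\dim C^0_y+\dim C^{(\le 2)}\text{-correction}$ to be \emph{locally constant} — more precisely, once $\Ext^2$ vanishes, the dimension of $\Ext^1$ (hence of $T_{\ran,y}$) is upper semicontinuous from one side and, via the alternating-sum relation with the ranks of the differentials which drop only on closed sets, also lower semicontinuous, so it is constant on a possibly smaller open neighborhood $U\ni\mu$. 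This gives condition (ii) of Lemma~\ref{crit}.

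For condition (i) I would invoke the standard obstruction theory for deformations of modules: the obstruction to integrating an infinitesimal deformation $\mu_{k[\epsilon]}$ of $M_y$ to a formal deformation over $k[[t]]$, computed order by order (the Maurer–Cartan / Gerstenhaber bracket recursion), lives in $\Ext^2_y(M,M)=H^2_{A^e}(A,\End_k(k^n))$. Since this group vanishes for every $y\in U$, every infinitesimal deformation is integrable; translating back through Lemma~\ref{tgran} and the description of $\ran$-points as module structures, this says exactly that for each $y'\in T_{U,y}$ the fiber $U\cap p_{t,\epsilon}^{-1}(y')$ is nonempty, which is condition (i). With both (i) and (ii) verified on $U$, Lemma~\ref{crit} applies and $\mu$ is a smooth point of $\ran$.

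The main obstacle I anticipate is making the semicontinuity and local-constancy argument for $\dim\Ext^1$ fully rigorous in the infinite-dimensional setting — this is precisely where coherence is essential, since without the finite-rank resolution of Lemma~\ref{freeres} the cochain groups $\Hom_{A^e}(F_i,\End_k(k^n))$ need not be finite-dimensional and the differentials need not vary algebraically, so the Geiss–de la Peña argument (originally for finite-dimensional $A$) would break down. A secondary technical point is checking that the differentials in the varying complex really are polynomial in the coordinates on $\ran$; this should follow from writing out the $A^e$-module maps $F_{i+1}\to F_i$ on chosen bases and noting that the induced maps on $\Hom$-spaces are given by the action of fixed algebra elements, which acts through the universal representation $\pi_A$ by matrices with entries polynomial in the $\xi^A_{lij}$.
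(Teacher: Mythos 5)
Your overall architecture is the same as the paper's: use coherence and Lemma \ref{freeres} to produce a cochain complex of finite-dimensional spaces computing $H^\bullet_{A^e}(A,\End_k(k^n))\cong\Ext^\bullet_A(M,M)$ whose differentials depend polynomially on the point of $\ran$, invoke semicontinuity plus a rank--nullity (Euler characteristic) relation for condition (ii) of Lemma \ref{crit}, and use vanishing of the obstruction group $\Ext^2$ for condition (i). Your treatment of (i) is fine, and in fact slightly more careful than the paper's, since you note explicitly that the $\Ext^2=0$ locus is open, so the hypothesis propagates to a whole neighborhood.

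Your verification of (ii), however, contains a genuine error. You identify $\dim T_{\ran,y}$ with $\dim\Ext^1_y(M,M)$ and claim the latter becomes locally constant once $\Ext^2$ vanishes; both statements are false. By Lemma \ref{tgran}, $T_y\ran\cong Der_y(A,M_n(k))$, the full space of Hochschild $1$-cocycles, whereas $\Ext^1_y(M,M)$ is that space modulo inner derivations; the two differ by $n^2-\dim\End_A(M_y)$, which is not locally constant. Concretely, for $A=k[x]$ and $n=2$ one has $\Ext^2=0$ everywhere and $\dim T_y\mathrm{Rep}^2_{k[x]}=4$ at every point, yet $\dim\Ext^1_y(M,M)=\dim\End_A(M_y)$ jumps from $2$ at a matrix with distinct eigenvalues to $4$ at a scalar matrix: $\dim\Ext^1$ is only upper semicontinuous and does not compute the tangent space. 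The quantity that must be shown locally constant is $\dim\ker\bigl(\Hom_{A^e}(F_0,\End_k(k^n))\to\Hom_{A^e}(F_1,\End_k(k^n))\bigr)$, which is canonically $\Hom_{A^e}(\Omega^1_A,\End_k(k^n))\cong Der_y(A,M_n(k))\cong T_y\ran$. The correct step (this is what the paper's relation for $\dim\ker d^0_\mu$ is doing) is: when $\Ext^2_y=0$ the image of that differential equals the kernel of the next one, so rank--nullity gives $\dim Der_y(A,M_n(k))+\dim\ker\bigl(\Hom_{A^e}(F_1,\End_k(k^n))\to\Hom_{A^e}(F_2,\End_k(k^n))\bigr)=\dim\Hom_{A^e}(F_0,\End_k(k^n))$, a constant; both summands are upper semicontinuous, hence each is locally constant, and so is $\dim T_y\ran$. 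With that substitution your argument closes; as written, the inference ``hence $\dim T_{U,y}$ is constant'' does not follow.
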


\begin{proof}
A point $\mu$ in $ \ran(k) $  corresponds to an $A-$module  $M\cong k^n.\,$
It is well-known that the obstructions to extend an infinitesimal deformation of the module  $M$ to a formal one are in $\Ext^2_{A}(M,M),\,$ see for example \cite[3.6. and 3.6.1.]{Ge}, so the first condition of Lemma \ref{crit} is satisfied by hypothesis.

\n
Being $A$ coherent, it admits a resolution by finite rank free A-bimodules (see Lemma \ref{freeres}):
\[
\dots \longrightarrow F_{s} \longrightarrow \dots \longrightarrow F_1 \longrightarrow F_0 \longrightarrow A^e \longrightarrow A \longrightarrow 0.
\]
\smallskip
\noindent
Apply $\Hom _{A^e} (-,\End _k(M))$ to the previous resolution:
\begin{equation}\label{longi}
\begin{array}{ll}
0 & \longrightarrow \Hom _{A^e} (A,\End _k(M)) \longrightarrow \Hom _{A^e} (A^e,\End _k(M)) \stackrel{d_{\mu}^0}{\longrightarrow}
\\
\\
&\stackrel{d_{\mu}^0}{\longrightarrow} \Hom _{A^e} (F_0,\End _k(M))
\stackrel{d_{\mu}^1}{\longrightarrow} \Hom _{A^e} (F_1,\End _k(M)) \stackrel{d_{\mu}^2}{\longrightarrow} \dots
\end{array}
\end{equation}

\medskip
\n
All the spaces $\Hom _{A^e} (F_i,\End _k(M))\cong M_n(k)^{rank(F_i)}$ are finite dimensional over $k.\, $  Furthermore, their ranks do not depend on $\mu .\,$ In particular note that $\Hom _{A^e} (A^e,\End _k(M))\cong \End _k(M) \cong M_n(k).\, $
The cohomology groups of the sequence \eqref{longi} are the Ext groups
\[
\Ext ^i_{A}(M,M)\cong H^i_{A^e} (A, \End _k (M))
\]
and by hypothesis $\Ext^2_A(M,M)=0.\,$
Therefore $ \dim \ker d^1_{\mu}=\dim \im \ d ^0 _{\mu}$ and
\begin{equation}\label{k01}
\dim \ker d ^0 _{\mu} = -\dim \ker d^1 _{\mu} + \dim \Hom _{A^e}(A^e, \End _k (M)).
\end{equation}
Observe now that the functions
\[
z^i \ : \ \ran (k) \longrightarrow  \NN , \;\;\;\;\; \mu \longmapsto  \dim_k \ker d^i _{\mu}
\]
are upper semicontinuous. Thus the function $k^0:=\dim \ker d^0 _{\mu}$ is constant in $\ran(k)\,$ by (\ref{k01}). It is well known that (see for example \cite[5.4.]{G1})
\[
\ker d^0 _ {\mu} \cong Der _{\mu}(A,M_n(k)).
\]
By Lemma \ref{tgran} we conclude that $T_{\mu}\ran \cong \ker d^0_{\mu}.\,$ It follows that
the dimension of the tangent space  $T_{\mu}\ran$ is constant. This gives $(ii)$ of Lemma \ref{crit} and we can conclude that $\mu$ is smooth in $\ran .$
\end{proof}

\section{Smoothness in dimension one}

\n
The aim here is to apply Theorem \ref{ness} to show that $\ran$ is smooth for hereditary algebras which are coherent.

\smallskip
\n

\begin{theorem}\label{e20}
Let $A$ be a coherent and hereditary algebra, then $\ran$ is smooth.
\end{theorem}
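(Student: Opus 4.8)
The plan is to deduce Theorem \ref{e20} directly from Theorem \ref{ness} by checking that its hypothesis $(*)$ holds at every $k$-point of $\ran$. Coherence of $A$ is assumed outright, so the only thing to verify is that $\Ext^2_A(M,M)=0$ for the $A$-module $M\cong k^n$ attached to an arbitrary $k$-point $\mu\in\ran(k)$. This is where the hypothesis $gd(A)\le 1$ enters: by the very definition of global dimension, $\Ext^i_A(N,N')=0$ for all left $A$-modules $N,N'$ and all $i\ge 2$; in particular $\Ext^2_A(M,M)=0$. (One can alternatively invoke Proposition \ref{prext}, with $n=1$, applied to $M$.)

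With that observation in hand the argument is essentially one line: fix a $k$-point $\mu$ of $\ran$, let $M\cong k^n$ be the associated $A$-module, note $A$ is coherent and $\Ext^2_A(M,M)=0$ by hereditariety, and apply Theorem \ref{ness} to conclude that $\mu$ is a smooth point of $\ran$. Since $\mu$ was an arbitrary $k$-point and $\ran$ is of finite type over the algebraically closed field $k$ (as $A$ is finitely generated), smoothness at all closed points implies $\ran$ is smooth as a $k$-scheme.

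I would phrase the write-up so that the genuine content — that coherence gives a finite-rank free bimodule resolution, and that this lets one run the Geiss–de la Peña constancy-of-tangent-dimension argument — is entirely quarantined inside Theorem \ref{ness}, and the present proof merely supplies the input $\Ext^2_A(M,M)=0$. There is no real obstacle here: the subtlety was already absorbed into the earlier results. The only point worth a word of care is making explicit that the vanishing of $\Ext^2$ must hold at \emph{every} $k$-point simultaneously (which it does, uniformly, since $gd(A)\le 1$ bounds $\Ext^2$ for \emph{all} modules), because Lemma \ref{crit} — and hence Theorem \ref{ness} — needs the constancy of $\dim T_{\mu}\ran$ on a neighborhood, not just vanishing at one point.

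\begin{proof}
Let $\mu\in\ran(k)$ be a $k$-point and let $M\cong k^n$ be the associated $A$-module (see Example \ref{commuting}(1)). Since $gd(A)\le 1$, every left $A$-module, in particular $M$, has projective dimension $\le 1$; hence by Proposition \ref{prext} we have $\Ext^i_A(M,-)=0$ for all $i\ge 2$, and in particular $\Ext^2_A(M,M)=0$. As $A$ is coherent by hypothesis, the pair $(A,M)$ satisfies condition $(*)$, so Theorem \ref{ness} applies and $\mu$ is a smooth point of $\ran$. Since $A$ is finitely generated, $\ran$ is of finite type over $k$, and $k$ is algebraically closed; as every closed point of $\ran$ is smooth, $\ran$ is smooth.
\end{proof}
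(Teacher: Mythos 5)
Your proof is correct and follows exactly the paper's route: the paper's own proof of Theorem \ref{e20} is the one-liner that $gd(A)\le 1$ forces $\Ext^2_A(M,M)=0$ by Proposition \ref{prext}, after which Theorem \ref{ness} applies. Your write-up just makes explicit the (routine) passage from smoothness at every closed point of a finite-type scheme over an algebraically closed field to smoothness of $\ran$, which the paper leaves implicit.
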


\begin{proof}
Let $A$ be an associative algebra with $gd(A)\leq 1\,$ and let $M$ be an $A-$module, then $\Ext^2_{A}(M,M)=0 $
(see Proposition \ref{prext}).
\end{proof}

\smallskip
\n

\section{Nori - Hilbert schemes}\label{defnori}

\n
In this paper the Nori - Hilbert scheme is the representing scheme of a functor of points $\mathcal{C}_k \to Sets ,\,$ which is given on objects by
\[
\begin{array}{ll}
\Hilb_A^n(B) := & \{\mbox {left ideals } I
\mbox { in } A \otimes_{k} B \mbox { such that } M=(A \otimes _k B) /I \
\mbox {is projective} \\
& \mbox{ of rank } n
\mbox { as a } B\mbox{-module} \}.
\end{array}
\]
where $A \in \mathcal{N}_k,\,\,B\in \mathcal{C}_k.\,$ Nori introduced this functor in \cite{No} only for the case $A=\mathbb Z \{x_1,\dots,x_m\}.\,$
$\Hilban$ is a closed subfunctor of the Grassmannian functor, so it is representable by a scheme (see \cite {VdB},
Proposition 2 ). It is also called {\textit{the non commutative Hilbert scheme}} (see \cite{Re}) or the {\textit {Brauer-Severi scheme}} of $A$ (see \cite {VdB,LB}).

\smallskip
\noindent
It was used to propose a desingularization of  $\ran // G .\,$ The candidate for the desingularization was a subscheme $V$ in $\Hilban \,$ and Nori showed that $V$ gives a desingularization for $n=2,\,$ but it was proved recently in \cite{Ol} that $V$ is singular for $n \geq 3.\,$

The scheme $\Hilban$ was then defined in a more general setting in \cite{VdB} and in \cite{Re}. It was also referred as {\it{Brauer-Severi scheme}} of $A\,$in \cite{LB,Le,Seel}  in analogy with the classical Brauer-Severi varieties parameterizing left ideals of codimension $n$ of central simple algebras (see \cite{A}). It is also called {\textit{the non commutative Hilbert scheme}} (see \cite{G-V,Re}). A generalization of the Hilbert to Chow morphism has been studied in \cite{G-V} by using the results of the second author \cite{V3, V1, V}. We address the interested reader to {\cite{V}} for a survey on these topics.

\smallskip
Van den Bergh showed that for $A=F$ the free associative algebra on $m$ variables, $\mathrm{Hilb}^n _F$  is smooth of dimension $n^2(m-1) +n ,\,$ (see \cite{VdB}).

\subsection{Hilbert schemes of $n$-points.}\label{npoints}
\label{commcase} Let now $A$ be commutative and  $X=\mathrm{Spec} A .\,$ The $k-$points of $\Hilb_A^n$ parameterize zero-dimensional
subschemes $Y\subset X$ of length $n.\,$ It is the simplest case of Hilbert scheme
parameterizing closed subschemes of $X$ with fixed Hilbert polynomial $P,\,$  in this case $P$ is the constant polynomial $n.\,$ The scheme $\Hilb_A^n$ is
usually called the {\em Hilbert scheme of $n-$points on} $X \,$ (see for example Chapter 7 in \cite{BK,Ia} and Chapter 1 in \cite{Na}).

\n
The Hilbert scheme of $n$-points of a smooth quasi-projective curve or surface is smooth.

\n
\begin{theorem} (see  \cite{De,Iv,Fo})\label{sthilb}
If $X$ is an irreducible an smooth curve,  then the Hilbert scheme of the $n-$points over $X$ is non singular and irreducible of dimension $n.\,$
\end{theorem}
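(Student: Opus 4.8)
The plan is to exploit the one feature that makes dimension one special: for every closed point $x\in X$ the local ring $\mathcal O_{X,x}$ is a discrete valuation ring, hence a principal ideal domain. First I would use this to show that any zero-dimensional closed subscheme $Z\subset X$ of length $n$ is an \emph{effective Cartier divisor}: at each of the finitely many points $x$ of its support, the ideal $\mathcal I_{Z,x}$ equals $(t^{m_x})$ for a uniformizer $t$, and $\sum_x m_x=n$; in particular the ideal sheaf $\mathcal I_Z$ is invertible. The same persists in families: a flat family over a base $T$ of length-$n$ subschemes of $X$ is, fibrewise and hence globally, a relative effective Cartier divisor of degree $n$. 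This identifies the Hilbert scheme $\mathrm{Hilb}^n_X$ (for $X$ quasi-projective, so that it exists) with the scheme $\mathrm{Div}^n(X)=\mathrm{Sym}^n(X)$ of effective divisors of degree $n$ on $X$; that is, the Abel/Hilbert--Chow morphism is an isomorphism for curves.

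It then remains to show that $\mathrm{Sym}^n(X)=X^n/S_n$ is smooth and irreducible of dimension $n$. Irreducibility and the dimension count are immediate: $X$ irreducible forces $X^n$ irreducible of dimension $n$, and passing to the quotient by the finite group $S_n$ preserves both. For smoothness I would argue étale-locally: near a cycle $\sum_i n_i x_i$ one may choose étale coordinates so that $X$ looks like $\mathbb A^1$ around each $x_i$, whence $\mathrm{Sym}^n(X)$ is étale-locally $\prod_i\mathrm{Sym}^{n_i}(\mathbb A^1)$, and $\mathrm{Sym}^m(\mathbb A^1)=\mathrm{Spec}\,k[x_1,\dots,x_m]^{S_m}=\mathrm{Spec}\,k[e_1,\dots,e_m]\cong\mathbb A^m$ by the fundamental theorem on symmetric functions. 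Alternatively --- and this is the argument closest in spirit to the rest of the paper --- one can bypass $\mathrm{Sym}^n$ and check smoothness of $\mathrm{Hilb}^n_X$ directly by deformation theory: the tangent space at $[Z]$ is $\Hom_{\mathcal O_X}(\mathcal I_Z,\mathcal O_Z)=\bigoplus_x\Hom_{\mathcal O_{X,x}}(\mathcal I_{Z,x},\mathcal O_{Z,x})$, which, since each $\mathcal I_{Z,x}\cong\mathcal O_{X,x}$ is free of rank one, equals $\bigoplus_x\mathcal O_{Z,x}$, of total $k$-dimension $\sum_x m_x=n$; and the obstruction group $\Ext^1_{\mathcal O_X}(\mathcal I_Z,\mathcal O_Z)$ vanishes, because $\mathcal I_Z$ is locally free (so its higher Ext-sheaves against any coefficient sheaf are zero) while $\mathcal O_Z$ is supported in dimension zero (so has vanishing $H^1$), and the local-to-global spectral sequence then forces $\Ext^1=0$. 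Hence the Hilbert scheme is unobstructed, smooth of dimension $n$ at every point, and being connected it is irreducible.

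The only genuinely delicate step is the scheme-theoretic identification $\mathrm{Hilb}^n_X\cong\mathrm{Sym}^n(X)$ --- equivalently, that \emph{every} flat family of length-$n$ subschemes, not merely every fibre, is a relative Cartier divisor --- and this, exactly like the vanishing of $\Ext^1_{\mathcal O_X}(\mathcal I_Z,\mathcal O_Z)$ above, is where $\dim X=1$ enters in an essential way: in dimension $\geq 2$ a zero-dimensional subscheme need not be a local complete intersection, the Hilbert--Chow morphism stops being an isomorphism, and $\mathrm{Hilb}^n$ is in general singular. Everything else is formal. This is precisely the commutative shadow of the theme of the present paper: a smooth curve is a coordinate ring of global dimension one, and it is hereditariety --- the vanishing of the relevant second Ext group, cf. Theorem~\ref{e20} --- that forces the parameter scheme to be smooth.
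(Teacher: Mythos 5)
Your sketch is mathematically correct and essentially complete, but note that the paper does not prove this statement at all: it is quoted as a classical theorem of Deligne, Iversen and Fogarty, and the authors only record that the standard proof rests on connectedness of the Hilbert scheme plus the existence of an open nonsingular subset of dimension $n$, with Serre duality used to show the tangent space has constant dimension $n$. Your route is genuinely different and, for curves, arguably cleaner: since the local rings of a smooth curve are discrete valuation rings, every length-$n$ subscheme is an effective Cartier divisor, which identifies $\Hilb^n_X$ with $\mathrm{Sym}^n(X)$ via the Hilbert--Chow morphism and reduces smoothness to the fundamental theorem on symmetric functions; and your alternative deformation-theoretic computation --- $\dim\Hom_{\mathcal O_X}(\mathcal I_Z,\mathcal O_Z)=n$ together with $\Ext^1_{\mathcal O_X}(\mathcal I_Z,\mathcal O_Z)=0$ because $\mathcal I_Z$ is invertible and $\mathcal O_Z$ has zero-dimensional support --- is exactly the commutative analogue of Theorem \ref{ness}, where coherence supplies the finite free resolution and the vanishing of $\Ext^2$ kills the obstructions, so it fits the spirit of the paper better than the quoted strategy does. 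Two caveats: the passage from fibrewise Cartier divisors to a relative Cartier divisor over an arbitrary base genuinely requires the local criterion of flatness (you rightly flag this as the delicate step and should cite or prove it); and your closing remark that $\mathrm{Hilb}^n$ is ``in general singular'' in dimension $\geq 2$ is inaccurate for surfaces --- by Fogarty's theorem the Hilbert scheme of points on a smooth surface is still smooth, of dimension $2n$; it is only the Hilbert--Chow morphism that ceases to be an isomorphism there, and singularities of $\mathrm{Hilb}^n$ itself first appear for threefolds and higher.
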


\smallskip
\n
These important result is based on the connectedness of the Hilbert scheme  and on the existence of an open non singular subset of known dimension $n.\,$ The Serre Duality Theorem is the key tool to prove that the tangent space has constant dimension $n.\,$

\medskip
\n
\subsection{An open subscheme in $\ran$}\label{uan}

For any $B\in\C_k\,$ identify $B^n$ with $\mathbb{A}^n_k(B).\,$

\begin{definition}\label{defuan}
For each $B\in \C_k$, consider the set
\[
\uan(B) \ = \ \{(\rho,v) \in \rana \ : \
\rho(A)(Bv)=B^n\}.
\]
The assignment $B\mapsto \uan(B)$ is functorial in $B$, so that
we get a subfunctor $\uan$ of  $\rana$ that is clearly open. We denote by $\uan$ the open subscheme of $\rana$ which represents it.
\end{definition}

\smallskip
\n
\begin{remark}\label{cyclic}
Note that the $A-$modules corresponding to points in $\uan$ are the cyclic modules.
\end{remark}

\smallskip
\n
Nori proved the following when $A=F.$

\begin{theorem}(\cite[Proposition 1]{No})\label{norif} The geometric quotient $\ufn / G$ exists and the natural map $\pi _F : \ufn \longrightarrow \ufn / G $ is a principal $G-$bundle. Moreover, $\ufn / G$ represents $\Hilbfn .$
\end{theorem}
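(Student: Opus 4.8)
The statement to prove is Theorem \ref{norif}: for $A=F$ the free algebra, the geometric quotient $\ufn/G$ exists, $\pi_F : \ufn \to \ufn/G$ is a principal $G$-bundle, and $\ufn/G$ represents $\Hilbfn$. The plan is to build the quotient map by hand using the cyclic-vector structure of points of $\ufn$, and then check the principal-bundle property by exhibiting local trivializations. First I would recall that a $B$-point of $\ufn$ is a pair $(\rho,v)$ with $\rho \in \ran(B)$ and $v \in B^n$ such that $\rho(F)(Bv) = B^n$, i.e. $v$ is a cyclic vector for the $F$-module structure on $B^n$ (Remark \ref{cyclic}). Since $F$ is free on $x_1,\dots,x_m$, giving $\rho$ is the same as giving an $m$-tuple of matrices $(X_1,\dots,X_m) \in M_n(B)^m$, with no relations; so $\ufn$ is the open subscheme of $M_n(B)^m \times B^n$ where the vectors $v, X_{i_1}\cdots X_{i_r}v$ (over all finite words) span $B^n$.

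The key construction is the following. To a pair $(\rho,v)$ as above, associate the surjective $F$-module map $\alpha_{\rho,v} : A\otimes_k B = F\otimes_k B \twoheadrightarrow B^n$, $f\otimes b \mapsto \rho(f)(bv)$, whose kernel is a left ideal $I_{\rho,v}$ with $(F\otimes_k B)/I_{\rho,v} \cong B^n$ projective (indeed free) of rank $n$ as a $B$-module. This gives a natural transformation $\ufn \to \Hilbfn$. The content to verify is: (a) two pairs $(\rho,v)$, $(\rho',v')$ over $B$ have the same image in $\Hilbfn(B)$ if and only if they differ by the $G(B)$-action $(\rho,v)\mapsto(\rho^g, gv)$ — the "only if" follows because an isomorphism of the quotient modules over $B$ matching the images of $1\otimes 1$ is precisely a $g \in \GL_n(B)$ conjugating the representations and sending $v$ to $v'$ (here one uses that both modules are identified with $B^n$ via the cyclic vector, so the $F$-module iso is $B$-linear, hence multiplication by an invertible matrix), and the "if" is immediate; and (b) the map $\ufn \to \Hilbfn$ is surjective on points after the $G$-action is accounted for, because given a left ideal $I$ with $M = (F\otimes_k B)/I$ free of rank $n$ over $B$, choosing a $B$-basis of $M$ identifies $M$ with $B^n$, the action of each $x_i$ gives a matrix $X_i$, and the image of $1\otimes 1$ gives the cyclic vector $v$. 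So $\Hilbfn$ is exactly the functor-of-points quotient of $\ufn$ by the free $G$-action.

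Next, the geometric quotient. I would argue that $G$ acts freely on $\ufn$ (freeness of the set-theoretic action follows from (a): if $\rho^g=\rho$ and $gv=v$ then $g$ fixes $v$ and commutes with all $\rho(f)$, hence fixes $\rho(f)v$ for all $f$, hence fixes a spanning set, so $g=1$; and this works over any $B$, giving scheme-theoretic freeness). To get local trivializations I would cover $\ufn$ by the open subschemes $U_w$ indexed by length-$n$ ordered tuples of words $w=(w_1,\dots,w_n)$ in $x_1,\dots,x_m$, where $U_w$ is the locus on which the vectors $\rho(w_1)v,\dots,\rho(w_n)v$ form a $B$-basis of $B^n$; these cover $\ufn$ because on $\ufn$ some such tuple is a basis, and finitely many words suffice by Nakayama/coherence-type arguments (in fact words of length $<n$). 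On $U_w$ the matrix $g_w = g_w(\rho,v)$ sending the standard basis to $(\rho(w_1)v,\dots,\rho(w_n)v)$ is invertible and depends algebraically on $(\rho,v)$; then $(\rho,v)\mapsto (\rho^{g_w^{-1}}, g_w^{-1}v)$ maps $U_w$ onto a slice on which the chosen vectors are the standard basis, and this exhibits $U_w \cong G \times S_w$ equivariantly, where $S_w$ is the slice. The slices $S_w$ glue (on overlaps the transition is by the algebraic function $g_{w'}^{-1}g_w$) to a scheme which is the geometric quotient $\ufn/G$, and $\pi_F$ is a principal $G$-bundle, Zariski-locally trivial. Finally, comparing the two natural transformations $\ufn \to \Hilbfn$ and $\ufn \to \ufn/G$, both are $G$-invariant and by (a)–(b) induce the same functor-of-points quotient, so $\ufn/G \cong \Hilbfn$ as schemes.

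The main obstacle I expect is the rigorous verification that the cover $\{U_w\}$ by word-indexed loci actually covers $\ufn$ with the transition functions being regular (not merely that each $U_w$ is open), together with checking that the candidate slices glue into a separated scheme of finite type representing the quotient functor — i.e. that the functor-of-points quotient is representable. This is where one must be careful that "cyclic" gives a spanning set coming from finitely many words of bounded length, so that $\ufn$ is quasi-compact and the $g_w$ are polynomial in the matrix entries of $(\rho,v)$ and in $\det(g_w)^{-1}$. Everything else — freeness, the bijection with $\Hilbfn(B)$, naturality — is a direct unwinding of definitions. Since $\Hilbfn$ is already known to be a scheme (closed subfunctor of a Grassmannian, see \cite{VdB}), one may alternatively shortcut the representability issue by defining $\ufn/G := \Hilbfn$ and proving directly that $\pi_F : \ufn \to \Hilbfn$ is a Zariski-locally trivial principal $G$-bundle via the trivializations above, which is the cleanest route.
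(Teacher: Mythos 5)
The paper does not prove this statement at all: it is quoted verbatim as Nori's result (\cite[Proposition 1]{No}), so there is no internal proof to compare against. Your sketch is, in substance, the standard argument (the one in Nori's appendix and in Van den Bergh's and Reineke's treatments): identify $\ufn(B)$ with pairs $(\rho,v)$ with $v$ cyclic, send $(\rho,v)$ to $\ker\bigl(F\otimes_k B\to B^n\bigr)$, check that fibres are free $G$-orbits, and trivialize locally over the loci $U_w$ where an $n$-tuple of words evaluated on $v$ is a basis. Two points deserve slightly more care than you give them. First, the presheaf quotient $B\mapsto \ufn(B)/G(B)$ is \emph{not} equal to $\Hilbfn(B)$ for general $B$: a point of $\Hilbfn(B)$ has quotient module only projective of rank $n$, so it lifts to $\ufn(B')$ only after passing to a Zariski cover of $\mathrm{Spec}\,B$ trivializing the module; what you actually prove is that $\Hilbfn$ is the Zariski sheafification of the orbit presheaf, which is exactly what is needed for a locally trivial principal bundle, but you should say so rather than assert the quotient functor is $\Hilbfn$ on the nose. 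Second, the covering claim for $\{U_w\}$ is cleanest as: words of length $<n$ suffice at every field-valued point (the span of words of length $\le j$ stabilizes once it stops growing), so $\bigcup_w U_w$ is an open subscheme containing all closed points of the finite-type scheme $\ufn$, hence equals $\ufn$. With those two clarifications, your argument is correct, and your proposed shortcut --- using the known representability of $\Hilbfn$ as a closed subfunctor of a Grassmannian and exhibiting $\ufn\to\Hilbfn$ directly as a Zariski-locally trivial $G$-bundle, so that $\Hilbfn$ \emph{is} the geometric quotient --- is indeed the cleanest way to organize it.
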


\smallskip
\n
This results has been generalized to the case $A=R\{x_1,\dots,x_m\}$ for $R$ commutative ring in \cite[Theorem 7.16]{Ba}. See also Proposition 6.3 in \cite{LB} which states the same result for the $\CC -$points in $\uan$.

\smallskip
\noindent
\begin{lemma}\label{pairs}
There exists a morphism $ u_A^n  \, : \, \uan \longrightarrow \Hilban $ which is surjective on the $k-$points.
\end{lemma}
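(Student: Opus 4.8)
The plan is to construct the morphism $u_A^n$ on functors of points and then check surjectivity on $k$-points using the description of $\uan$ as the locus of cyclic vectors. First I would note that, by Definition \ref{defuan}, a $B$-point of $\uan$ is a pair $(\rho,v)$ with $\rho:A\to M_n(B)$ an $n$-dimensional representation and $v\in B^n$ such that $\rho(A)(Bv)=B^n$; via Remark \ref{isorho} this endows $B^n$ with an $A\otimes_k B$-module structure, and the condition $\rho(A)(Bv)=B^n$ says precisely that $v$ is a cyclic generator, i.e. the map $A\otimes_k B\to B^n$, $a\otimes b\mapsto b\,\rho(a)v$, is surjective. Its kernel $I_{(\rho,v)}$ is therefore a left ideal of $A\otimes_k B$ with $(A\otimes_k B)/I_{(\rho,v)}\cong B^n$ as a $B$-module, hence projective of rank $n$. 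Thus setting $u_A^n(B):(\rho,v)\mapsto I_{(\rho,v)}$ gives a natural transformation $\uan\to\Hilban$, and by Yoneda this is induced by a morphism of schemes $u_A^n:\uan\to\Hilban$. I would spend a line checking naturality in $B$ (a ring map $B\to B'$ carries the cyclic generator to a cyclic generator and the ideal to the base-changed ideal), which is routine.

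Next I would prove surjectivity on $k$-points. Given a $k$-point of $\Hilban$, that is a left ideal $I\subseteq A$ with $M:=A/I$ of dimension $n$ over $k$ (here $B=k$ and "projective of rank $n$" just means $n$-dimensional), the left multiplication action of $A$ on $M$ is a representation $\rho:A\to M_n(k)=\End_k(M)$ after a choice of $k$-basis of $M$. The image $\bar 1$ of $1\in A$ in $M=A/I$ is a cyclic vector: $\rho(A)(k\bar 1)=A\cdot\bar 1=M$. Letting $v\in k^n$ be the coordinate vector of $\bar 1$ in the chosen basis, the pair $(\rho,v)$ lies in $\uan(k)$ by Definition \ref{defuan}, and by construction the kernel of $A\to M$, $a\mapsto a\bar 1$, is exactly $I$, so $u_A^n(k)(\rho,v)=I$. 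Hence $u_A^n$ is surjective on $k$-points.

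The main point requiring care is the identification, at the level of schemes rather than just $k$-points, of the kernel ideal $I_{(\rho,v)}$ and the verification that $(A\otimes_k B)/I_{(\rho,v)}$ is $B$-projective of rank $n$ for \emph{every} $B$ — this is what makes $u_A^n$ land in the Hilbert functor rather than merely matching $k$-points. Here the cyclicity condition $\rho(A)(Bv)=B^n$ does the work: it makes $A\otimes_k B\twoheadrightarrow B^n$ surjective, and since the target is a free $B$-module the sequence $0\to I_{(\rho,v)}\to A\otimes_k B\to B^n\to 0$ splits as $B$-modules, giving projectivity (indeed freeness) of rank $n$ of the quotient, as required by the definition of $\Hilban(B)$. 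Surjectivity on $k$-points then follows from the elementary cyclic-module argument above; note we do not claim $u_A^n$ is an isomorphism or even a bundle (that is the content of Theorem \ref{norif} and its generalizations, which concern the quotient $\uan/G$), only that it is defined and surjective on closed points, which is all the statement asks.
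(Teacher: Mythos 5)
Your construction is exactly the paper's: the map $a\otimes b\mapsto \rho(a)bv$ on $A\otimes_k B$, whose kernel gives the $B$-point of $\Hilban$, with naturality in $B$ and surjectivity on $k$-points via the cyclic generator $\bar 1\in A/I$. You spell out a couple of details the paper leaves implicit (the splitting argument for projectivity of the quotient, and the explicit preimage of a $k$-point), but the approach and all essential steps coincide, and the proof is correct.
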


\begin{proof}
Take $(\rho,v) \in \uan (B)\, $ and consider the surjective map
\[
A \otimes B  \longrightarrow B^n , \;\;\;\;\; a \otimes b \longmapsto \rho(a) b v,
\]
its kernel $I$ is a $B-$point in $\Hilban .\, $
The assignment $(\rho,v) \mapsto I $
is natural in $B$ giving thus a morphism  $u_A^n : \uan \longrightarrow \Hilban $.
This morphism is surjective on the $k-$points, since any projective $k-$module of rank $n$ is isomorphic to $k^n.$
\end{proof}

\smallskip
\n
Consider now the composition
\[
\uan  \stackrel {i}{\hookrightarrow} \ufn \stackrel {\pi _F}{\longrightarrow} \ufn /G
\]
of the closed embedding $i$ given by the surjection  $F \rightarrow A=F/J$  together with the quotient map $\pi _F.$
By using \cite[Theorem 3.9]{Ba} we have the following

\begin{proposition}
The schematic image $Z$ of $\uan$ under the composition $\pi _F \cdot i\,$ is the descent of $\uan$ under $\pi _F ,\,$ that is,
\[
\uan \cong Z \times _{\ufn  /G } \ufn.
\]
Thus $\uan$ is identified with the Zariski locally-trivial principal $G-$bundle given by the pull back of $\pi _F$ to $Z$. In particular $Z$ is smooth if and only if $\uan$ is smooth.
\end{proposition}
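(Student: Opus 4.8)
The plan is to derive all three assertions from Nori's Theorem \ref{norif} together with the descent statement \cite[Theorem 3.9]{Ba}, once we have checked that $i$ is a $G$-equivariant closed immersion. First I would record that the surjection $\beta\colon F\to A$ induces a closed immersion $\rana\hookrightarrow\mathrm{Rep}_F^n\times_k\mathbb{A}^n_k$, and that for $(\rho,v)\in\rana(B)$ the lift $\rho\circ\beta\colon F\to M_n(B)$ satisfies $(\rho\circ\beta)(F)=\rho(A)$ because $\beta$ is surjective. Hence $\uan$ is exactly the preimage of the open subscheme $\ufn$ under this closed immersion, so $i\colon\uan\hookrightarrow\ufn$ is a closed immersion. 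Moreover the defining condition $\rho(A)(Bv)=B^n$ of Definition \ref{defuan} is stable under $(\rho,v)\mapsto(\rho^g,gv)$, since $\rho^g(A)(Bgv)=g\,\rho(A)(Bv)$, so the action of Definition \ref{actions} restricts to $\uan$ and $i$ is $G$-equivariant.

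Next I would invoke Theorem \ref{norif}: $\pi_F\colon\ufn\to\ufn/G$ is a principal $G$-bundle, and since $G=\GL_n$ every $\GL_n$-torsor is Zariski locally trivial, so $\ufn/G$ is covered by opens $U$ with $\pi_F^{-1}(U)\cong U\times_k G$, the action being translation on the second factor. On such a chart the $G$-stable closed subscheme $i(\uan)\cap\pi_F^{-1}(U)$ has the form $W\times_k G$ for a unique closed subscheme $W\subseteq U$: by faithfully flat descent along the trivial $G$-torsor $U\times_k G\to U$, the $G$-stable closed subschemes of $U\times_k G$ are precisely those pulled back from $U$, and then $W$ is the schematic image of $W\times_k G$ in $U$ because that projection is faithfully flat. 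Gluing the $W$'s over the cover produces a closed subscheme $Z\subseteq\ufn/G$ which is, by construction, the schematic image of $\uan$ under $\pi_F\circ i$ and which satisfies $\uan|_{\pi_F^{-1}(U)}\cong W\times_k G\cong Z|_U\times_k G$; patching these identifications gives $\uan\cong Z\times_{\ufn/G}\ufn$. This is exactly the descent assertion recorded in \cite[Theorem 3.9]{Ba}, to which I would refer for the scheme-theoretic details.

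Finally, the identification $\uan\cong Z\times_{\ufn/G}\ufn$ exhibits $\pi_F|_{\uan}\colon\uan\to Z$ as the base change of the principal $G$-bundle $\pi_F$, hence itself a Zariski locally trivial principal $G$-bundle; over a suitable open of $Z$ it is just $Z\times_k\GL_n\to Z$. Since $\GL_n$ is smooth over $k$, a product $Z\times_k\GL_n$ is smooth over $k$ if and only if $Z$ is, and smoothness is local on source and target; therefore $\uan$ is smooth if and only if $Z$ is. The step I expect to be the real obstacle is the middle one: verifying, with the correct scheme structures, that a $G$-stable closed subscheme of a trivialized chart $U\times_k G$ descends to a closed subscheme of $U$, and that the glued-up $Z$ is the genuine \emph{schematic} --- not merely set-theoretic --- image of $\uan$; this is what forces one to keep track of the faithful flatness of $\pi_F$ everywhere, and is precisely the content borrowed from \cite{Ba}.
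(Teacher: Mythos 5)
Your proposal is correct and follows the same route the paper takes: the paper offers no written proof, simply deriving the proposition from Nori's Theorem \ref{norif} together with the descent result \cite[Theorem 3.9]{Ba}, which is exactly the skeleton you use. You merely supply the details the paper delegates to that citation (the $G$-equivariant closed immersion $i$, local triviality of the $\GL_n$-torsor, descent of $G$-stable closed subschemes, and the smoothness equivalence), and these verifications are sound.
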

\medskip
\noindent
This is the last observation we need to prove the main result of this section.
\begin{proposition}\label{schemim}
The Hilbert scheme $\Hilb ^n_A$ and the schematic image $Z$ coincide on the $k-$points.
\end{proposition}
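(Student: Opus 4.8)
The plan is to compare the $k$-points of $\Hilban$ with those of $Z$ directly, exploiting the surjection established in Lemma \ref{pairs} together with the principal $G$-bundle descriptions already in hand. First I would recall the two maps already built: the surjective morphism $u_A^n : \uan \to \Hilban$ from Lemma \ref{pairs}, and the composition $\pi_F \cdot i : \uan \to \ufn/G$ whose schematic image is $Z$. Since $\ufn \to \ufn/G$ is a principal $G$-bundle representing $\Hilbfn$ (Theorem \ref{norif}), and since the closed embedding $i$ is induced by $F \twoheadrightarrow A = F/J$, a $k$-point of $\ufn/G$ lies in $Z$ precisely when the corresponding codimension-$n$ left ideal of $F$ contains $J$, equivalently when it descends to a codimension-$n$ left ideal of $A$. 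This identifies $Z(k)$ with the set of left ideals $I \subset A$ with $\dim_k A/I = n$, which is exactly $\Hilban(k)$.

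The key steps, in order, are as follows. Step one: describe $Z(k)$ concretely as the image of $\uan(k)$ in $\ufn/G$ under $\pi_F \cdot i$; because $\pi_F$ is a principal bundle, two points of $\uan(k)$ have the same image iff they differ by the $G$-action, i.e. iff the associated cyclic $A$-modules are isomorphic (Remark \ref{isorho}). Step two: for a point $(\rho, v) \in \uan(k)$, unwind $u_A^n(\rho,v)$ — by the proof of Lemma \ref{pairs} it is the kernel $I$ of $A \to k^n$, $a \mapsto \rho(a)v$, which is a left ideal of codimension $n$. Step three: show this sets up a bijection $Z(k) \leftrightarrow \Hilban(k)$: surjectivity onto $\Hilban(k)$ is the content of Lemma \ref{pairs} (every codimension-$n$ left ideal $I$ gives the cyclic module $A/I \cong k^n$ with a cyclic vector, hence a point of $\uan(k)$), and injectivity follows because two pairs mapping to the same ideal $I$ give isomorphic cyclic $A$-modules with chosen cyclic vectors, hence are $G$-conjugate, hence have the same image in $Z(k)$. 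Step four: invoke the preceding Proposition to identify $Z$ with the descent of $\uan$ along $\pi_F$, so that the bijection on $k$-points is compatible with the scheme structures coming from both sides.

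The main obstacle I expect is making the identification of $k$-points genuinely canonical rather than merely a set-theoretic bijection: one must check that the morphism $u_A^n$ of Lemma \ref{pairs} and the descent morphism $\uan \to Z$ of the previous Proposition fit into a commutative triangle, so that $u_A^n$ factors through an identification $Z(k) \xrightarrow{\sim} \Hilban(k)$ rather than just a bijection constructed by hand. Concretely, I would verify that the universal cyclic module on $\uan$ pushed down to $Z$ agrees with the pullback along $Z \to \Hilban$ of the universal quotient on $\Hilban$; since both are defined by the same formula $a \otimes b \mapsto \rho(a)bv$ and $Z$ carries the quotient $G$-bundle structure under which this formula is $G$-invariant, the two agree after descent. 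Once this compatibility is in place, evaluating on $k$-points and using that $\pi_F$ is a principal bundle (so $Z(k)$ is literally $\uan(k)/G(k)$) yields $Z(k) = \Hilban(k)$, completing the proof.
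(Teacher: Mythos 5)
Your proposal is correct and follows essentially the same route as the paper: both arguments factor $u_A^n$ through $\pi_F\cdot i$, observe that the image of $\uan$ lands in $\Hilban\subset\Hilbfn$, and then combine the surjectivity of $u_A^n$ on $k$-points (Lemma \ref{pairs}) with the principal $G$-bundle description of $\uan\to Z$ from the preceding Proposition. Your version is somewhat more careful than the paper's about why the schematic image has no extra $k$-points beyond the set-theoretic image and about the compatibility of $u_A^n$ with the descent map, but these are refinements of the same argument rather than a different one.
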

\begin{proof}
Denote $j : F \rightarrow A=F/J\,$ and take $(\rho,v) \in \uan (B).\,$ Note that
\[
(\pi _F \cdot i) (\rho,v) \, = \, \pi _F (\rho \cdot j,v).
\]
Thus, the kernel $I$ of the map
\[
F \otimes B  \longrightarrow B^n , \;\;\;\;\; f \otimes b \longmapsto (\rho \cdot j)(f) b v
\]
is contained in $\Hilban (B),\,$ that is, $u^n_F(\rho \cdot j)=I \in \Hilban (B)$ (see Lemma \ref{pairs}).
This means that the image $(\pi_F \cdot i) (\uan)$ is contained in $\Hilban .\,$ Now, from the defining properties of the schematic image follow that $Z \subset \Hilban .\,$
We have seen that $u^n _A$ is surjective on the $k-$points and this gives the result.
\end{proof}

\smallskip
\n
\subsection{A smoothness criterion }\label{smoocrit}
From the previous two propositions we have the following.

\smallskip
\n
\begin{theorem}\label{uasm}
The scheme $\Hilban $ is smooth iff $\uan $ is smooth.
\end{theorem}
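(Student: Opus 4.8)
The plan is to show that the morphism $u_A^n : \uan \to \Hilban$ of Lemma \ref{pairs} is a Zariski locally-trivial principal $G$-bundle, and then to read off the equivalence by descent of smoothness along it. Once the bundle structure is known, $G=\GL_n$ is smooth over $k$, so $u_A^n$ is smooth, faithfully flat and surjective; if $\Hilban$ is smooth over $k$ then $\uan$ is smooth over $k$ as the source of the composite of smooth morphisms $\uan \to \Hilban \to \mathrm{Spec}\,k$, and conversely, if $\uan$ is smooth over $k$ then, $u_A^n$ being faithfully flat and locally of finite presentation, the target $\Hilban \to \mathrm{Spec}\,k$ is smooth by flat descent of smoothness (if $g\circ f$ is smooth and $f$ is fppf, then $g$ is smooth).

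To obtain the bundle structure I would use the two Propositions just proved. By the Proposition immediately preceding Proposition \ref{schemim}, $\uan$ is the pullback of the principal $G$-bundle $\pi_F : \ufn \to \ufn/G = \Hilbfn$ along the inclusion of the schematic image $Z \hookrightarrow \Hilbfn$, so $\uan \to Z$ is already a Zariski locally-trivial principal $G$-bundle and $Z$ is smooth over $k$ if and only if $\uan$ is. The proof of Proposition \ref{schemim} shows that this inclusion factors through a closed immersion $Z \hookrightarrow \Hilban$ and that the induced map $\uan \to \Hilban$ is precisely $u_A^n$; hence the schematic image of $u_A^n$ in $\Hilban$ is $Z$, and by Proposition \ref{schemim} the inclusion $Z \hookrightarrow \Hilban$ is a bijection on $k$-points. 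Everything thus reduces to the scheme-theoretic equality $Z = \Hilban$.

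For this last point — the real content — I would check that $u_A^n$ is faithfully flat, so that $\mathcal{O}_{\Hilban}\to (u_A^n)_*\mathcal{O}_{\uan}$ is injective and the schematic image of $u_A^n$ is all of $\Hilban$; since that image is $Z$, we get $Z=\Hilban$. Faithful flatness I would establish by producing Zariski-local sections of $u_A^n$: on an open $U\subseteq\Hilban$ over which the universal quotient module $\mathcal{M}=(A\otimes\mathcal{O}_{\Hilban})/\mathcal{I}$ is free of rank $n$, a choice of $\mathcal{O}_U$-basis gives an isomorphism $\mathcal{M}|_U\cong\mathcal{O}_U^n$, the residue class of $1$ is a cyclic vector $v$, and the left $A$-action is a representation $\rho:A\to M_n(\mathcal{O}_U)$, so $(\rho,v)$ is a $U$-point of $\uan$ lifting $U\hookrightarrow\Hilban$; the sheaf of such lifts over $U$ is a torsor under $\mathrm{Aut}_{\mathcal{O}_U}(\mathcal{O}_U^n)=\GL_n$. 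This realizes $u_A^n$ as Zariski locally trivial with structure group $G$, hence faithfully flat, giving $Z=\Hilban$; combined with ``$Z$ smooth over $k$ iff $\uan$ smooth over $k$'', the theorem follows.

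The main obstacle is exactly the identification $Z = \Hilban$, that is, the fact that $\Hilban$ is the fppf (here even Zariski-local) quotient $\uan/G$ for an arbitrary finitely generated $A$, and not only for the free algebra $F$, for which it follows from \cite{No} and \cite{Ba}. Proposition \ref{schemim} alone does not suffice, because $\Hilban$ might a priori be non-reduced and a closed subscheme sharing its $k$-points need not equal it; the local-section argument is what removes the possible nilpotents, and it incidentally re-proves the principal-$G$-bundle structure of $u_A^n$ directly. All the rest is formal flat descent of smoothness along a smooth surjective morphism.
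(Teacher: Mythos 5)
Your proposal follows the same skeleton as the paper's proof --- reduce to the schematic image $Z$, use the principal $G$-bundle structure of $\uan \to Z$ from the proposition preceding Proposition \ref{schemim}, and conclude by descent of smoothness along the bundle --- but you correctly identify and repair a point the paper glosses over. The paper's argument is the one-line chain ``$\Hilban$ smooth $\iff$ all points of $\Hilban(k)=Z(k)$ are smooth $\iff Z\cong\uan/G$ smooth $\iff\uan$ smooth,'' which silently uses that $Z$ and $\Hilban$ agree as schemes; Proposition \ref{schemim} only gives equality of $k$-points, and, as you note, a closed subscheme of a possibly non-reduced finite-type scheme can share all its $k$-points without being equal to it, so smoothness of $Z$ would not by itself force smoothness of $\Hilban$. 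Your local-section argument --- trivialize the universal quotient module over an open cover of $\Hilban$, take the image of $1$ as cyclic vector to produce a section of $u_A^n$, and observe that the set of such lifts is a $\GL_n$-torsor --- shows that $u_A^n$ is a Zariski locally trivial $G$-bundle onto all of $\Hilban$, hence faithfully flat, hence $Z=\Hilban$ scheme-theoretically; the rest is formal descent of smoothness along a smooth surjection. This is a genuine strengthening of the written proof: where the paper leans on Nori and Balaji for the free algebra and then passes to the schematic image, your torsor computation gives the quotient presentation $\Hilban\cong\uan/G$ directly and uniformly for arbitrary finitely generated $A$, at the modest cost of an explicit (and correct) local triviality check.
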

\begin{proof}
We have the following equivalences:

\smallskip
\n
$\Hilban{\text{ is smooth }} \iff {\text{ all points in }}\Hilban(k)=Z(k){\text{ are smooth }}\iff Z\cong \uan/G {\text{ is smooth }}
\iff\uan{\text{ is smooth }}.$
\end{proof}

\smallskip
\n
The smoothness of $\ran$ implies that the open $\uan$ is smooth too. Therefore Theorem \ref{uasm} implies the following

\begin{theorem}
If $A$ is formaly smooth, then $\Hilban$ is smooth.
\end{theorem}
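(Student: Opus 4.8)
The plan is to reduce the smoothness of $\Hilban$ to the already-established smoothness of $\ran$ for formally smooth $A$, using the criterion of Theorem \ref{uasm}. First I would invoke Theorem \ref{fsmran}: since $A$ is formally smooth, the scheme $\ran$ is smooth over $k$ (for every $n$). Next, because $\mathbb{A}^n_k$ is smooth over $k$ and a fibre product of smooth $k$-schemes is again smooth over $k$, the product $\rana=\ran\times_k\mathbb{A}^n_k$ is smooth. Then, since $\uan$ is by Definition \ref{defuan} an open subscheme of $\rana$, and an open subscheme of a smooth scheme is smooth, $\uan$ is smooth. Finally, Theorem \ref{uasm} states that $\Hilban$ is smooth if and only if $\uan$ is smooth, which yields the claim.

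The only points needing a word of justification are that smoothness is stable under the two operations used here, namely forming products over the base field and passing to open subschemes; both are standard. I do not anticipate a genuine obstacle: the entire content of the statement is carried by Theorems \ref{fsmran} and \ref{uasm} together with the observation that $\uan$ is open in a smooth scheme.

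If one preferred an argument that does not route through Theorem \ref{uasm} directly, an alternative is to use the morphism $u_A^n:\uan\to\Hilban$ of Lemma \ref{pairs} together with the Proposition identifying $\uan$ with the Zariski locally-trivial principal $G$-bundle over the schematic image $Z$, which agrees with $\Hilban$ on $k$-points; smoothness then descends along this bundle from the smooth total space $\uan$. This is, however, essentially a repackaging of the same facts, so the short route above is the one I would record.
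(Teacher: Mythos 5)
Your proposal is correct and follows the same route as the paper: the paper derives this theorem immediately from Theorem \ref{fsmran} (smoothness of $\ran$ for formally smooth $A$), the observation that $\uan$ is then a smooth open subscheme of $\rana$, and the criterion of Theorem \ref{uasm}. Your explicit remarks on stability of smoothness under products and open immersions merely spell out what the paper leaves implicit.
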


\n
From the results of Section \ref{defsmooth} we derive

\smallskip
\noindent
\begin{theorem}\label{herhilb}
Let $A$ be a hereditary and coherent algebra, then $ \Hilban$ is smooth.
\end{theorem}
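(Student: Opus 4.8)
The plan is to deduce this from the smoothness of $\ran$ established in Theorem \ref{e20}, transporting that smoothness through the two geometric relations linking $\Hilban$ to $\ran$: the passage to the open subscheme $\uan\subseteq\rana$, and the principal $G$-bundle $\uan\to\uan/G\cong Z$ together with the identification of $Z$ with $\Hilban$ on $k$-points. Both of these are already packaged for us in Theorem \ref{uasm}, so the argument is short.

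First I would apply Theorem \ref{e20}: since $A$ is hereditary and coherent, the scheme $\ran=\mathrm{Spec}\,V_n(A)$ is smooth over $k$. Next I would observe that $\rana=\ran\times_k\mathbb{A}^n_k$ is smooth over $k$ as well, being the product over $k$ of the smooth $k$-scheme $\ran$ with the affine space $\mathbb{A}^n_k$. Since $\uan$ is, by Definition \ref{defuan}, an open subscheme of $\rana$, and smoothness is a local property, $\uan$ is smooth. Finally, Theorem \ref{uasm} says that $\Hilban$ is smooth if and only if $\uan$ is smooth; combining this with the smoothness of $\uan$ just obtained gives the claim.

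The entire substance of the result is thus carried by Theorem \ref{ness} (specialized in Theorem \ref{e20}), whose proof uses coherence to build a resolution of $A$ by finite rank free $A$-bimodules, identifies $\Ext^i_A(M,M)$ with the Hochschild cohomology $H^i_{A^e}(A,\End_k(M))$, and runs the semicontinuity argument of Geiss and de la Pe\~na to obtain a tangent space of constant dimension together with integrability of the infinitesimal deformations. The only extra point needing attention is that smoothness genuinely transfers along the open immersion $\uan\hookrightarrow\rana$ and along the quotient $\uan\to Z$; the first is immediate by locality, and the second is exactly what Theorem \ref{uasm} and the propositions preceding it record. Hence I expect no real obstacle beyond correctly invoking the already-proved statements.
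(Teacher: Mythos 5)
Your proposal is correct and follows essentially the same route as the paper: smoothness of $\ran$ (via Theorem \ref{ness}/\ref{e20}) passes to the product $\rana$, hence to the open subscheme $\uan$, and then Theorem \ref{uasm} transfers it to $\Hilban$. The only cosmetic difference is that the paper argues pointwise via Theorem \ref{ness} while you invoke the global statement of Theorem \ref{e20}; your remark that $\rana$ is smooth as a product of smooth $k$-schemes is a step the paper leaves implicit.
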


\begin{proof}
We know by Theorem\ref{uasm} that $\Hilb _A ^n$ is smooth if and only if $\uan$ is smooth. A point $ I \in \Hilban (k) $ corresponds to a point in $(\mu,v) \in \uan (k)$ and $\mu$ is smooth in $\ran$ by Theorem\ref{ness}. This means that the corresponding point is smooth in $\uan$ which is open in $\rana .\,$ Thus we have proved that $(\mu,v)$  is smooth in $\uan$ and we've done.
\end{proof}

\begin{remark}
If $A$ is commutative and noetherian, the previous result gives Theorem\ref{sthilb}, since hereditary algebras are always regular (see Corollary \ref{fingdreg}). They are precisely the coordinate rings of smooth affine curves (see  \ref{hereditary}).
\end{remark}

\subsection{$\Hilb_A^1$}\label{h1}
We want to study $\Hilb_A^1$.  We need to introduce the following.
\begin{definition}
Given a $k-$algebra $A$ we denote by $[A]$ the two-sided ideal of
$A$ generated by the commutators $[a,b]=ab-ba$ with $a,b\in A$. We
write
\[A^{ab}=A/[A]\]
and call it the abelianization of $A$.
\end{definition}

\begin{proposition}\label{abf}
For all $B\in \C_k$ there is an isomorphism $\C_k(A^{ab},B)\to \nn_k(A,B)$ by means of $\rho\mapsto\rho\cdot\mathfrak{ab}_A$. Equivalently for all $\varphi\in\nn_k(A,B)$ there is a unique $\overline{\varphi}:A^{ab}\to B$ such that the following diagram commutes
    \[
\xymatrix{
  A \ar[dr]_{\varphi} \ar[r]^{\mathfrak{ab}_A}
                & A^{ab} \ar[d]^{\overline{\varphi}}  \\
                & B             }
\]
\end{proposition}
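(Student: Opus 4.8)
The plan is to recognize Proposition \ref{abf} as nothing more than the universal property of the abelianization $A^{ab}=A/[A]$, i.e. the adjunction between the inclusion $\C_k\hookrightarrow\nn_k$ and the functor $A\mapsto A^{ab}$. First I would note that $\mathfrak{ab}_A\colon A\to A^{ab}$ is a surjective homomorphism of $k$-algebras, so precomposition $\rho\mapsto\rho\cdot\mathfrak{ab}_A$ defines a map $\C_k(A^{ab},B)\to\nn_k(A,B)$ for every $B\in\C_k$ (composition of $k$-algebra maps is again a $k$-algebra map). Since $\mathfrak{ab}_A$ is an epimorphism, this assignment is injective: two maps out of $A^{ab}$ that agree after restriction along a surjection must coincide.

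For surjectivity, take $\varphi\in\nn_k(A,B)$. Because $B$ is commutative, $\varphi([a,b])=\varphi(a)\varphi(b)-\varphi(b)\varphi(a)=0$ for all $a,b\in A$, so $\ker\varphi$ is a two-sided ideal containing every commutator, hence containing the ideal $[A]$ they generate. Therefore $\varphi$ factors through the quotient, giving a $k$-algebra homomorphism $\overline{\varphi}\colon A^{ab}=A/[A]\to B$ with $\overline{\varphi}\cdot\mathfrak{ab}_A=\varphi$; uniqueness of $\overline{\varphi}$ is again forced by surjectivity of $\mathfrak{ab}_A$. This produces the commuting triangle in the statement and shows the map $\rho\mapsto\rho\cdot\mathfrak{ab}_A$ is a bijection, with naturality in $B$ immediate from associativity of composition.

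There is essentially no obstacle here: the single point worth stating is the passage from ``$\varphi$ kills all commutators'' to ``$\varphi$ kills the ideal $[A]$'', which is routine since the kernel is already a two-sided ideal. The proposition is the $n=1$ analogue of Lemma \ref{rep} (with $M_1(B)=B$), and combined with it one reads off $V_1(A)\cong A^{ab}$, i.e. $\ran[1]=\Hilb_A^1\cong\mathrm{Spec}\,A/[A]$, which is the identification used in \ref{h1}.
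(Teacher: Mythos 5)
Your proof is correct and is exactly the standard argument the paper has in mind: the paper's own proof is just the single word ``Straightforward,'' and what you have written out (well-definedness, injectivity from surjectivity of $\mathfrak{ab}_A$, surjectivity because a map into a commutative algebra kills all commutators and hence the two-sided ideal they generate) is the routine verification being elided. No gaps.
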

\begin{proof}
Straightforward.
\end{proof}

\begin{proposition} There is an isomorphism
$\Hilb_A^1\cong \mathrm{Spec}\ A^{ab}$
of $k-$schemes.
\end{proposition}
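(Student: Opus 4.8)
The plan is to show directly that the functor $B \mapsto \Hilb_A^1(B)$ on $\C_k$ is naturally isomorphic to the functor $B \mapsto \Hom_{\C_k}(A^{ab},B)$ represented by $\mathrm{Spec}\,A^{ab}$, and then invoke Yoneda. By Proposition~\ref{abf} the latter functor is canonically $B \mapsto \Hom_{\nn_k}(A,B)$, so it suffices to produce a bijection $\Hilb_A^1(B) \cong \Hom_{\nn_k}(A,B)$ natural in $B$. (As a consistency check, this also matches Lemma~\ref{rep} for $n=1$, where $M_1(B)=B$ forces $V_1(A)\cong A^{ab}$ and hence $\ran=\Hilb_A^1$, as claimed in the introduction.)

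Given $I \in \Hilb_A^1(B)$, put $M := (A\otimes_k B)/I$; this is a cyclic left $A\otimes_k B$-module with distinguished generator $\bar 1$, and by hypothesis it is invertible (projective of rank $1$) as a $B$-module. First I would record that the structure map $B\to\End_B(M)$ is then an isomorphism, which may be checked after localizing $B$ at each prime; composing the $A\otimes_k B$-module structure map with its inverse yields a homomorphism of $B$-algebras $\rho_I : A\otimes_k B \to B$ through which $A\otimes_k B$ acts on $M$ by $B$-scalars. Since the whole algebra acts by scalars, $M = (A\otimes_k B)\bar 1 = B\bar 1$, so $M$ is already $B$-cyclic; combined with invertibility this forces $\mathrm{Ann}_B(\bar 1)=0$ (again a local check), hence $M\cong B$ via $\bar 1\mapsto 1$, and under this identification $\rho_I(x)=x\bar 1$ and $I=\ker\rho_I$. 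Restricting $\rho_I$ along $a\mapsto a\otimes 1$ gives $\psi_I\in\Hom_{\nn_k}(A,B)$.

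For the inverse, given $\psi\in\Hom_{\nn_k}(A,B)$ form the map $\Psi:A\otimes_k B\to B$, $a\otimes b\mapsto\psi(a)b$; it is a well-defined homomorphism of $k$-algebras precisely because $B$ is commutative, and a surjective $B$-algebra map since $\Psi(1\otimes b)=b$, so $I_\psi:=\ker\Psi$ is a left ideal with $(A\otimes_k B)/I_\psi\cong B$ free of rank $1$, i.e. $I_\psi\in\Hilb_A^1(B)$. The two assignments are mutually inverse: from $\psi$ one recovers $\psi$ by construction, while from $I$ the map $\Psi$ attached to $\psi_I$ is the unique $B$-algebra map $A\otimes_k B\to B$ restricting to $\psi_I$ on $A\otimes 1$, hence equals $\rho_I$, so $I_{\psi_I}=\ker\rho_I=I$. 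Naturality in $B$ is immediate from functoriality of $-\otimes_k B$ and of kernels, so we obtain the desired isomorphism of functors and therefore $\Hilb_A^1\cong\mathrm{Spec}\,A^{ab}$.

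The one delicate point, and the step I expect to be the main obstacle, is the upgrade from ``$(A\otimes_k B)/I$ projective of rank $1$'' to ``free of rank $1$ with $\bar 1$ as a basis''. Without it one only obtains the inclusion $\ker\rho_I\subseteq I$ (the annihilator of $M$ inside the annihilator of $\bar 1$), and the equality $I=\ker\rho_I$, which is exactly what makes the two constructions mutually inverse, could fail. The key input making it work is that an invertible $B$-module $M$ has $\End_B(M)=B$, so the possibly noncommutative algebra $A\otimes_k B$ is forced to act on $M$ through scalars; the remaining verifications are routine.
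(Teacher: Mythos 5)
Your proof is correct and follows the same route as the paper: both identify $\Hilb_A^1(B)$ with $\Hom_{\nn_k}(A,B)\cong\Hom_{\C_k}(A^{ab},B)$ naturally in $B$ and then conclude by representability. The one place you go beyond the paper is precisely the point you flag as delicate: the paper simply asserts that every $B$-point $I$ arises as the kernel of $a\otimes b\mapsto\rho(a)bv$ for a representation $\rho:A\to B$ and a cyclic vector $v$, i.e.\ that the rank-one projective quotient $(A\otimes_k B)/I$ is free with basis $\bar 1$, whereas you actually prove this, using $\End_B(M)\cong B$ for $M$ invertible to force $A\otimes_k B$ to act through $B$-scalars and a localization argument to show $\mathrm{Ann}_B(\bar 1)=0$. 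That argument is sound and fills in a step the paper's proof leaves as an assertion.
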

\begin{proof}
As we observed, for all $B\in\C_k$ a $B-$point $I<A\otimes_k B$ of $\Hilb_A^1$ can be realized as $\ker(A\to B)$ where the map is given by $a\mapsto \rho(a)v$, with $\rho:A\to M_1(B)=B$ a linear representation and $v\in B$ a cyclic vector. $B$ is commutative and therefore $I$ is bilateral. This gives, jointly with Prop. \ref{abf}, an equivalence $\Hilb_A^1(B)\cong \Hom_{\C_k}(A^{ab},B)$, natural in $B$.
\end{proof}

As a corollary of the above Proposition and the Theorem \ref{ness} we have the following.
\begin{corollary}
For a hereditary and coherent algebra $A$ it holds that $\Hilb_A^1$ is a smooth and affine scheme.
\end{corollary}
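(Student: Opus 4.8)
The plan is to combine the two immediately preceding results. The first of these identifies $\Hilb_A^1$ with $\mathrm{Spec}\ A^{ab}$ as $k$-schemes, so it suffices to prove that $\mathrm{Spec}\ A^{ab}$ is smooth and affine when $A$ is hereditary and coherent. Affineness is immediate: $A^{ab}=A/[A]$ is a commutative $k$-algebra, and it is finitely generated since $A$ is, being a quotient of $F=k\{x_1,\dots,x_m\}$, so $A^{ab}$ is a quotient of $k[x_1,\dots,x_m]$. Hence $\mathrm{Spec}\ A^{ab}$ is an affine $k$-scheme of finite type, and it remains to establish smoothness.

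For smoothness I would argue that $\Hilb_A^1$ coincides with $\ran$ for $n=1$. Indeed, by Definition \ref{pigreco} the scheme $\mathrm{Rep}_A^1=\mathrm{Spec}\ V_1(A)$ parameterizes one-dimensional representations $\rho:A\to M_1(B)=B$, and these automatically factor through $A^{ab}$ by Proposition \ref{abf}; conversely every homomorphism $A^{ab}\to B$ gives such a representation. Thus $V_1(A)\cong A^{ab}$ and $\mathrm{Rep}_A^1\cong\mathrm{Spec}\ A^{ab}\cong\Hilb_A^1$. (Alternatively one notes that for $n=1$ the cyclic-vector condition in Definition \ref{defuan} is automatic on the unit, so $\mathrm{U}_A^1=\mathrm{Rep}_A^1\times_k\mathbb{A}^1_k$ and the $\GL_1$-bundle of Theorem \ref{uasm} is trivial, again yielding $\Hilb_A^1\cong\mathrm{Rep}_A^1$.) Now apply Theorem \ref{ness}, or more directly Theorem \ref{e20}: since $A$ is coherent and hereditary, every $k$-point of $\mathrm{Rep}_A^1$ is smooth, because coherence gives a resolution of $A$ by finite-rank free $A$-bimodules and $gd(A)\le 1$ forces $\Ext^2_A(M,M)=0$ for the relevant module $M\cong k$. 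Hence $\mathrm{Rep}_A^1$ is smooth, and therefore so is $\Hilb_A^1$.

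The only point requiring a little care — and the one I would flag as the main obstacle — is the identification $\Hilb_A^1\cong\mathrm{Rep}_A^1$ at the scheme level rather than merely on $k$-points: Proposition \ref{schemim} only asserts agreement of $\Hilban$ with the schematic image $Z$ on $k$-points, and Theorem \ref{ness} is literally a statement about smoothness of $k$-points of $\ran$. For an algebraic scheme over an algebraically closed field, smoothness is equivalent to all closed (i.e.\ $k$-rational) points being smooth, so checking $k$-points suffices once we know $\Hilb_A^1$ and $\mathrm{Rep}_A^1$ are of finite type over $k$ with the same underlying reduced structure; but to get the clean scheme isomorphism it is cleanest to observe directly from the functor of points that $\Hilb_A^1(B)\cong\Hom_{\C_k}(A^{ab},B)\cong\mathrm{Rep}_A^1(B)$ naturally in $B$, which is exactly what the preceding Proposition supplies. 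With that in hand the corollary follows, and I would write it up in essentially two lines: $\Hilb_A^1\cong\mathrm{Spec}\ A^{ab}\cong\mathrm{Rep}_A^1$ is affine of finite type and smooth by Theorem \ref{e20}.
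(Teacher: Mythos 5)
Your proposal is correct and follows essentially the same route as the paper: identify $\Hilb_A^1\cong\mathrm{Spec}\ A^{ab}\cong\mathrm{Rep}_A^1$ (the identification $\mathrm{Rep}_A^1\cong\mathrm{Spec}\,A/[A]$ is exactly the one the paper invokes, cf.\ the introduction) and then apply Theorem \ref{ness} via Theorem \ref{e20}. Your extra care about making the isomorphism functorial in $B$ rather than only on $k$-points is a welcome clarification, but it does not change the argument.
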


\bigskip
\centerline{Acknowledgement}
We would like to thank Corrado De Concini for sharing his ideas with us.
We warmly thank Victor Ginzburg for pointing out a serious mistake in the first version of this paper.
A special thank goes to Sandra Di Rocco for having invited us to the KTH Department of Mathematics.
We would also like to thanks the organizers of the semester and Mittag-Leffler Institut staff.

\bibliographystyle{amsplain}

\bigskip

\begin{flushleft}


Federica~Galluzzi\\
Dipartimento di Matematica\\
Universit\`a di Torino\\
Via Carlo Alberto n.10 ,Torino\\
10123, \ ITALIA \\
e-mail: \texttt{federica.galluzzi@unito.it}\\[2ex]

Francesco~Vaccarino\\
Dipartimento di Matematica\\
Politecnico di Torino\\
C.so Duca degli Abruzzi n.24, Torino\\
 10129, \ ITALIA \\
e-mail: \texttt{francesco.vaccarino@polito.it}

\end{flushleft}

\end{document}